\setlist{nosep}
\newtheorem{theorem}{Theorem}[section]
\newtheorem{lemma}[theorem]{Lemma}
\newtheorem{proposition}[theorem]{Proposition}
\newtheorem{problem}[theorem]{Problem}
\newtheorem{construction}[theorem]{Construction}
\newtheorem{claim}[theorem]{Claim}
\newcommand{\ex}{{\rm ex}}
\newcommand{\eps}{\varepsilon}
\newcommand{\tG}{\widetilde{G}}
\def\V{\mathcal{V}}
\newenvironment{proofclaim}[1][Proof of claim]{\begin{proof}[#1]}{\end{proof}}
\numberwithin{equation}{section}
\begin{document}
	\textwidth 150mm \textheight 225mm
	
	\title{Complete tripartite subgraphs of balanced tripartite graphs with large minimum degree}
	\author{
		Yihan Chen\footnote{School of Mathematical Sciences, University of Science and Technology of China, Hefei 230026, China.}~~~~~~
		Jialin He\footnote{Department of Mathematics, Hong Kong University of Science and Technology, Clear Water Bay, Kowloon 999077, Hong Kong.}~~~~~~
		Allan Lo\footnote{School of Mathematics, University of Birmingham, B15 2TT, United Kingdom.}~~~~~~
		Cong Luo\footnotemark[1]~~~~~~
		Jie Ma\footnotemark[1]~~~~~~~
		Yi Zhao\footnote{Department of Mathematics and Statistics, Georgia State University, Atlanta, GA 30303.}
	}
	\date{\today}
	\maketitle
	\begin{center}
		\begin{minipage}{120mm}
			\begin{center}
				{\small {\bf Abstract}}
			\end{center}
			{\small In 1975, Bollob\'{a}s, Erd\H{o}s, and Szemer\'{e}di \cite{bollobas1975complete} asked what minimum degree guarantees an octahedral subgraph $K_3(2)$ in any tripartite graph $G$ with $n$ vertices in each vertex class. We show that $\delta(G)\geq n+2n^{\frac{5}{6}}$ suffices, thus improving the bound $n+(1+o(1))n^{\frac{11}{12}}$ of Bhalkikar and Zhao \cite{bhalkikar2023subgraphs} obtained by following the approach of \cite{bollobas1975complete}. Bollob\'{a}s, Erd\H{o}s, and Szemer\'{e}di conjectured that $n+cn^{\frac{1}{2}}$ suffices and there are many $K_3(2)$-free tripartite graphs $G$ with $\delta(G)\geq n+cn^{\frac{1}{2}}$. We prove this conjecture under the additional assumption that every vertex in $G$ is adjacent to at least $(1/5+\varepsilon)n$ vertices in any other vertex class. 
				\vskip 0.1in \noindent {\bf AMS Subject Classification (2020)}: \ 05C15, 05C35
			}
		\end{minipage}
	\end{center}
	
	\section{Introduction}
	As a foundation stone of extremal graph theory, the celebrated Tur\'{a}n's theorem in 1941~\cite{turan1941external} determined the maximum size $\ex(n,K_t)$ of graphs $G$ of order $n$ without $K_t$ as a subgraph (the $t=3$ case is also known as Mantel's theorem~\cite{mantel1907opgaven28}). In 1974, Bollob\'{a}s, Erd\H{o}s, and Straus~\cite{bollobas1974complete} obtained a Tur\'{a}n-type result that determined the maximum size of an $r$-partite graph  not containing $K_t$ as a subgraph for any $r\ge t$. Instead of considering the size, in 1975, Bollob\'{a}s, Erd\H{o}s, and Szemer\'{e}di~\cite{bollobas1975complete} investigated the $r=t$ case of the following problem.
	\begin{problem}\label{problem1}
		Given integers $n$ and $3 \leq t \leq r$, what is the largest minimum degree $\delta(G)$ among all $r$-partite graphs $G$ with parts of size $n$ and which do not contain a copy of $K_t$?
	\end{problem}
	The $r=t$ case has received a lot of attention and found applications in linear arboricity, hypergraph matching, list coloring, etc. Graver (see~\cite{bollobas1975complete}) answered Problem~\ref{problem1} for $r=t=3$ and Jin~\cite{jin1992complete} solved it for $r=t=4, 5$. 
	The $r=t$ case of Problem~\ref{problem1} was finally settled by Haxell and Szab\'{o}~\cite{haxell2006odd} and Szab\'{o} and Tardos~\cite{szabo2006extremal}. Recently Lo, Treglown, and Zhao~\cite{lo2022complete} solved many $r > t$ cases of the problem, including when $r \equiv -1$ $(\text{mod } t - 1)$ and $r\ge (3t-4)(t-2)$. For more related results, we refer interested readers to~\cite{alon1988linear, haxell2001note, haxell2006odd, jin1992complete, szabo2006extremal}.
	
	\vskip 0.1in
	
	Let $G_r(n)$ be an (arbitrary) \emph{balanced $r$-partite graph} with parts of size $n$, and let $K_r(s)$ denote the complete $r$-partite graph with parts of size $s$. In the same paper, Bollob\'{a}s, Erd\H{o}s, and Szemer\'{e}di~\cite{bollobas1975complete} asked when $G_3(n)$ contains $K_3(2)$ (known as the \emph{octahedral graph}) as a subgraph.
	\begin{problem}\label{prob2}
		What minimum degree $\delta(G)$ in a graph $G=G_3(n)$ ensures a subgraph $K_3(2)$?
	\end{problem}
	The authors of~\cite{bollobas1975complete} conjectured that $\delta(G)\geq n+cn^{\frac{1}{2}}$ is sufficient for some constant $c$.
	They also stated that for $n\geq 2^8$, $\delta(G)\geq n+2^{-\frac{1}{2}}n^{\frac{3}{4}}$ guarantees a $K_3(2)$, which follows from their result on~$K_3(s)$~\cite[Theorem 2.6]{bollobas1975complete}. Unfortunately, Bhalkikar and Zhao~\cite{bhalkikar2023subgraphs} found a miscalculation in the proof of~\cite[Theorem 2.6]{bollobas1975complete}. After correcting this error, they followed the approach in~\cite{bollobas1975complete} and obtained the following result.
	\begin{theorem}[\cite{bhalkikar2023subgraphs}]\label{thmzhao}
		Given $s\geq 2$, $\eps>0$, and sufficiently large $n$, every graph $G = G_3(n)$ with $\delta(G) \geq n + (1+\eps)(s-1)^{1/(3s^2)} n^{1-1/(3s^2)}$ contains a copy of $K_3(s)$. In particular, $\delta(G)\geq n+(1+\eps)n^{\frac{11}{12}}$ guarantees a~$K_3(2)$.
	\end{theorem}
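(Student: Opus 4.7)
The plan is to carry out the Bollobás--Erdős--Szemerédi counting scheme, bookkeeping the constants carefully as in the correction of Bhalkikar and Zhao. Let $V_1,V_2,V_3$ denote the three parts of $G$ and write $d:=\delta(G)-n$. For every vertex $v\in V_i$, the condition $|N(v)\cap V_j|+|N(v)\cap V_k|\geq n+d$ combined with the trivial bound $|N(v)\cap V_\ell|\leq n$ forces both cross-degrees into the interval $[d,n]$. In particular every bipartite subgraph $G[V_i,V_j]$ has at least $nd$ edges, and the ``extreme'' of a cross-degree pair is $\{d,n\}$.

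The strategy is a cascade of three Jensen/Kővári--Sós--Turán-type convexity applications. For the first step, note that subject to $x+y\geq n+d$ and $x,y\leq n$ the product $\binom{x}{s}\binom{y}{s}$ is minimized on the boundary at $\{x,y\}=\{d,n\}$, so for $v\in V_3$ with $x=|N(v)\cap V_1|$ and $y=|N(v)\cap V_2|$,
\[
\binom{x}{s}\binom{y}{s}\ \geq\ \binom{d}{s}\binom{n}{s}.
\]
Summing over $V_3$ and rewriting the sum as a codegree count yields
\[
\sum_{(A_1,A_2)\in\binom{V_1}{s}\times\binom{V_2}{s}}\bigl|N_{V_3}(A_1)\cap N_{V_3}(A_2)\bigr|\ \geq\ n\binom{d}{s}\binom{n}{s},
\]
so some pair $(A_1,A_2)$ has common $V_3$-codegree at least $(1-o(1))d^s/n^{s-1}$. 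The second step refines this sum by restricting to pairs $(A_1,A_2)$ which already span a complete $K_{s,s}$ in $G[V_1,V_2]$: the inner count $\sum_{v\in V_3}(\#K_{s,s}\text{'s in }G[N(v)\cap V_1,\,N(v)\cap V_2])$ is bounded below via Kővári--Sós--Turán applied inside each vertex's cross-neighborhood, with the required edge count coming from the inclusion--exclusion $|N(u)\cap N(v)\cap V_2|\geq |N(u)\cap V_2|+|N(v)\cap V_2|-n$. Finally, a third averaging step extracts $s$ vertices $A_3\subseteq N_{V_3}(A_1)\cap N_{V_3}(A_2)$, which together with $A_1,A_2$ spans the desired $K_3(s)$. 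Matching the three cascading $(d/n)^s$ losses against the demand that the final residual size exceed $s$ converts to $d^{3s^2}\gtrsim (s-1)\,n^{3s^2-1}$, i.e.\ $d\geq (1+\varepsilon)(s-1)^{1/(3s^2)}n^{1-1/(3s^2)}$, exactly as stated.

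The main obstacle is the second step. The inclusion--exclusion lower bound $|N(u)\cap N(v)\cap V_2|\geq|N(u)\cap V_2|+|N(v)\cap V_2|-n$ is only useful when one of the two cross-degrees is already within $O(d)$ of $n$, yet nothing in the minimum-degree hypothesis forces a typical vertex to be this close to complete. One must therefore filter the counting sums down to vertices whose cross-degrees are ``almost complete'' on the relevant side, and then balance this restriction against the convexity-loss budget so that the product of the three Jensen inequalities yields the sharp exponent $1-1/(3s^2)$ with leading constant $(s-1)^{1/(3s^2)}$. This is precisely the bookkeeping where the original computation of \cite{bollobas1975complete} slipped and where \cite{bhalkikar2023subgraphs} supplied the corrected accounting, and getting it right at every stage is the technical heart of the argument.
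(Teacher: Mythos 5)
There is a genuine gap: what you have written is a plan rather than a proof, and the step that you yourself flag as ``the technical heart'' --- producing a $K_{s,s}$ inside $G[V_1,V_2]$ within the cross-neighbourhoods and balancing the resulting losses --- is never carried out. Your first convexity step is sound (for $x+y\ge n+d$ with $d\le x,y\le n$ one does have $\binom{x}{s}\binom{y}{s}\ge\binom{d}{s}\binom{n}{s}$, and the codegree averaging follows), but it only yields a pair $(A_1,A_2)$ with large common neighbourhood in $V_3$; nothing forces $A_1\cup A_2$ to span a complete bipartite graph, so no $K_3(s)$ is obtained at that point. The inclusion--exclusion bound $|N(u)\cap N(v)\cap V_2|\ge |N(u)\cap V_2|+|N(v)\cap V_2|-n$ on which your second step relies can be vacuous (as small as $2d-n<0$) for a typical edge, and the filtering to ``almost complete'' cross-degrees that you allude to is precisely where the quantitative content of the theorem lives. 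Finally, the closing claim that ``matching the three cascading losses \dots converts to $d^{3s^2}\gtrsim(s-1)n^{3s^2-1}$'' asserts the answer rather than deriving it: the exponent $3s^2$ must emerge from how the three averaging steps compose, and without that computation there is no way to verify that the constant $(1+\eps)(s-1)^{1/(3s^2)}$ is achieved. As it stands, the argument establishes only the first averaging step.

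For context, the paper does not prove Theorem~\ref{thmzhao} at all --- it is quoted from \cite{bhalkikar2023subgraphs} --- and instead proves the stronger Theorem~\ref{thmk3s} (exponent $1-\tfrac{1}{s(s+1)}$ in place of $1-\tfrac{1}{3s^2}$) by a shorter route that avoids your problematic second step entirely: one chooses $T_1\subseteq V_1$ of size $t$ maximizing $\sum_{x\in T_1}d^+_G(x)$ and, for each $x\in T_1$, a $t$-set $T_x\subseteq N^-_G(x)$; the bound $T(xy)\ge d^+(x)+d^-(y)-n$ together with the maximality of $T_1$ gives $\sum_{x\in T_1,\,y\in T_x}T(xy)\ge t^3$ directly, after which a single convexity step and one application of the K\H{o}v\'ari--S\'os--Tur\'an bound (Lemma~\ref{Zarankiewicz}) produce the $K_3(s)$. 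If your aim is the stated bound you must still supply the Bhalkikar--Zhao bookkeeping in full; if you are willing to change which pairs you count over, the paper's route is both cleaner and stronger.
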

	
	In this paper, we establish several results towards Problem~\ref{prob2}. 
	First we improve Theorem~\ref{thmzhao} as follows.

		\begin{theorem}\label{thmk3s}
		Given $s\ge 2$, $C= 2(s-1)^{\frac{1}{s+1}}$, and sufficiently large $n$, every graph $G = G_3(n)$ with 
		$\delta(G)\geq n+Cn^{1-\frac{1}{s(s+1)}}$ contains a copy of $K_3(s)$.
		In particular, if $\delta(G)\geq n+2n^{\frac{5}{6}}$, then $G$ contains a~$K_3(2)$.
	\end{theorem}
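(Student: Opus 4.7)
The plan is to locate an $s$-set $C^* \subset V_3$ for which the bipartite graph $H := G[N_1(C^*), N_2(C^*)]$ contains a $K_{s,s}$; appending $C^*$ to this $K_{s,s}$ yields the desired $K_3(s)$. Set $d = Cn^{1-1/(s(s+1))}$ with $C = 2(s-1)^{1/(s+1)}$, so the hypothesis reads $\delta(G) \ge n+d$ and in particular every vertex has at least $d$ neighbors in each of the other two parts. Summing the inequality $|N_j(v)| + |N_k(v)| \ge n+d$ over $v \in V_i$ gives $e(V_i, V_j) + e(V_i, V_k) \ge n(n+d)$ for each permutation $\{i,j,k\} = \{1,2,3\}$, and these three inequalities force at least two of the pair edge-counts to be $\ge n(n+d)/2$. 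After relabeling, I will assume $e(V_1, V_2) \ge n(n+d)/2$. For $S \subset V_i$, write $N_j(S) = \bigcap_{v \in S} N_j(v)$.

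To find $C^*$ I will use the identity
\[
\sum_{C \in \binom{V_3}{s}} e\bigl(G[N_1(C), N_2(C)]\bigr) \;=\; \sum_{ab \in E(V_1, V_2)} \binom{|N_3(a) \cap N_3(b)|}{s},
\]
together with a lower bound on the triangle count $\Delta = \sum_{ab \in E(V_1, V_2)} |N_3(a) \cap N_3(b)|$. Using the inclusion-exclusion bound $|N_3(a) \cap N_3(b)| \ge |N_3(a)| + |N_3(b)| - n$ and Cauchy-Schwarz on $\sum_a |N_3(a)|^2$ and $\sum_b |N_3(b)|^2$, one obtains $\Delta \ge |E(V_1, V_2)|(n+2d) - 2|E(V_1, V_2)|^2/n$, which yields $\Delta / |E(V_1, V_2)| \ge d$ in the regime $|E(V_1, V_2)| \approx n(n+d)/2$. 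Jensen's inequality applied to the convex function $\binom{\cdot}{s}$ then gives
\[
\sum_C e\bigl(G[N_1(C), N_2(C)]\bigr) \;\ge\; |E(V_1, V_2)|\binom{d}{s} \;\ge\; \tfrac12\, n^2 \binom{d}{s},
\]
and by averaging there exists $C^*$ with $e(H) \ge \tfrac{n^2 \binom{d}{s}}{2 \binom{n}{s}} \gtrsim \tfrac{C^s}{2} \, n^{2 - 1/(s+1)}$.

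Finally I will extract $K_{s,s} \subset H$ via Kővári-Sós-Turán: the KST threshold for $K_{s,s}$ in a bipartite graph with parts of sizes at most $n$ is at most $(s-1)^{1/s} n^{2 - 1/s}/2 + O(n)$, so $e(H) > (s-1)^{1/s} n^{2-1/s}/2$ suffices. Since $2 - 1/(s+1) > 2 - 1/s$, our lower bound $e(H) \gtrsim \tfrac{C^s}{2} n^{2-1/(s+1)}$ comfortably exceeds this threshold for $n$ large, yielding $K_{s,s} \subset H$ and hence $K_3(s) \subset G$; the specific constant $C = 2(s-1)^{1/(s+1)}$ is calibrated so that the asymptotic constants line up cleanly. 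The hard part will be the triangle estimate: the Cauchy-Schwarz bound on $\Delta$ degrades as $|E(V_1, V_2)|$ approaches $n^2$, so the argument above is cleanest in the ``moderately dense'' regime $|E(V_1, V_2)| \approx n(n+d)/2$. When $G[V_1, V_2]$ is nearly complete bipartite, I expect the argument should be supplemented by a direct supersaturation estimate producing many $K_{s,s}$-copies in $G[V_1, V_2]$ with a large common $V_3$-neighborhood, which should be straightforward in that very dense regime.
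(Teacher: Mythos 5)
Your overall skeleton (double count triangles over a family of $V_1$--$V_2$ edges, apply convexity to find an $s$-set with a dense common link, then invoke K\H{o}v\'ari--S\'os--Tur\'an) is the same as the paper's, but the step that carries all the weight --- the claim that the \emph{average} number of triangles per edge of $E(V_1,V_2)$ is at least $d$ --- is false, and your derivation of it is incorrect. Writing $d_{12}(v)$ for degrees inside $G[V_1,V_2]$ and $E=|E(V_1,V_2)|$, the inclusion--exclusion bound gives
\[
\Delta \;\ge\; (n+2d)E-\sum_{a\in V_1}d_{12}(a)^2-\sum_{b\in V_2}d_{12}(b)^2 ,
\]
and to reach $\Delta\ge E(n+2d)-2E^2/n$ you need the \emph{upper} bound $\sum_a d_{12}(a)^2\le E^2/n$; Cauchy--Schwarz gives exactly the reverse inequality, with equality only when $G[V_1,V_2]$ is regular. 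In general $\sum_a d_{12}(a)^2$ can be as large as $nE$, which makes your lower bound on $\Delta$ vacuous. The failure is not merely technical: in the known $K_3(2)$-free extremal constructions with $\delta(G)=n+t$ (e.g.\ Construction~\ref{construction 1}), the total number of triangles is $O(t^3)$ while every pair of parts spans $\Theta(n^2)$ edges, so the average number of triangles per edge is $O(t^3/n^2)\ll t$ for every choice of the pair $(V_i,V_j)$. No selection of a ``moderately dense'' pair can rescue the estimate, and the issue is degree irregularity of $G[V_1,V_2]$, not the near-complete regime you flag at the end; the supersaturation patch you propose does not address it.

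This is precisely the point where the paper's proof does something different: instead of averaging over all of $E(V_1,V_2)$, it averages over only $t^2$ carefully chosen edges, namely $\{xy: x\in T_1,\, y\in T_x\}$ where $T_1$ is a $t$-set \emph{maximizing} $d_G^+(T)$ and $T_x\subseteq N^-(x)$ has size $t$. The maximality makes the error terms telescope ($t\,d_G^+(T_1)-\sum_{x\in T_1}d_G^+(T_x)\ge 0$), yielding $\sum T(xy)\ge t^3$, i.e.\ average $\ge t$ over those $t^2$ edges. Correspondingly, KST is then applied to a bipartite graph with parts of sizes $t$ and $n$ (threshold $\approx (s-1)^{1/s}tn^{1-1/s}$), not $n$ and $n$; balancing $t^{s+2}/n^s$ against $tn^{1-1/s}$ is exactly what produces the exponent $1-\tfrac{1}{s(s+1)}$. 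A further warning sign in your write-up: if your triangle estimate were correct, your computation would yield the theorem already for $d\gtrsim n^{1-1/s^2}$, strictly better than the paper's bound obtained by a more delicate argument --- which should have prompted a check of the dubious step.
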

	
		This is a strengthening of \cite{bhalkikar2023subgraphs}, as the way to fix~\cite{bollobas1975complete}. Note that we can easily replace the constant $2$ in $C$ by $1+o(1)$ but use the current definition of~$C$ for simplicity.
	
	Let $V_1,V_2,V_3$ be the three vertex classes of a graph $G=G_3(n)$. 
	We call $\min\{d_G(v,V_i): v\in V(G)\setminus V_i, i\in [3]\}$ the {\it minimum partial degree} of $G$,
	where $d_G(v,V_i)$ denotes the number of neighbors of $v$ in $V_i$. 
	Our next result shows that $G=G_3(n)$ contains a copy of $K_3(2)$ under the conjectured condition $\delta(G)\geq n+cn^{\frac{1}{2}}$ if, in addition, the minimum partial degree is at least $(1/5+\varepsilon)n$. 

	\begin{theorem}\label{theorem: partial degree}
		For every $c \ge 58$, there exists $n_0 = n_0(c)$ such that every tripartite graph $G=G_3(n)$ with $n \ge n_0$, $\delta(G)\geq n+30^5 c^4 n^{\frac{1}{2}}$, and the minimum partial degree at least $(1/5 +  7/c)n$ contains a~$K_3(2)$. 
	\end{theorem}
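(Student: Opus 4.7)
I would argue by contradiction: suppose $G$ contains no copy of $K_3(2)$. Set $\alpha = 1/5 + 7/c$ and $\beta = 30^5 c^4 n^{1/2}$, so $\delta(G) \ge n+\beta$ and every vertex has at least $\alpha n$ neighbors in each of the two other parts. The strategy is a double count of co-edges through vertex pairs, combined with the K\H{o}v\'{a}ri--S\'{o}s--Tur\'{a}n (KST) bound for $K_{2,2}$-free bipartite graphs.

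The crucial structural observation is that $G$ is $K_3(2)$-free if and only if, for every pair $\{u_1,u_2\}\subset V_i$ with $\{j,k\}=\{1,2,3\}\setminus\{i\}$, the bipartite graph $G[A,B]$ is $K_{2,2}$-free, where $A=N(u_1)\cap N(u_2)\cap V_j$ and $B=N(u_1)\cap N(u_2)\cap V_k$. KST then gives $e(G[A,B])\le \frac{1}{2}|B|(1+\sqrt{|A|-3/4})\le n^{3/2}(1+o(1))$. Summing over pairs $\{u_1,u_2\}\subset V_1$ and swapping summation order,
\[
\sum_{xy\in E(V_2,V_3)}\binom{|N(x)\cap N(y)\cap V_1|}{2}=\sum_{\{u_1,u_2\}\subset V_1}e\bigl(G[A,B]\bigr)\le \binom{n}{2}\,n^{3/2}(1+o(1)).
\]
With $|T|$ denoting the triangle count of $G$, convexity gives the LHS at least $e_{23}\binom{|T|/e_{23}}{2}\ge |T|^2/(2e_{23})-|T|/2$, yielding the KST-type ceiling $|T|\le O\bigl(n^{7/4}\sqrt{e_{23}}\bigr)\le O(n^{11/4})$.

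The matching lower bound uses the partial- and total-degree conditions. For each vertex $u$ with other parts $V_j,V_k$, the partial degrees $x = d(u,V_j)$, $y=d(u,V_k)$ satisfy $x,y\ge\alpha n$ and $x+y\ge n+\beta$, hence $xy\ge\alpha(1-\alpha)n^2+\alpha n\beta$. Combining with the inclusion--exclusion estimate $e(N_j(u),N_k(u))\ge e_{jk}-n^2+d(u,V_j)d(u,V_k)$ and summing over $u$ through all three cyclic choices of $(i,j,k)$, using $\sum_{i<j}e_{ij}\ge\frac{3}{2}n(n+\beta)$, I expect this to yield a lower bound on $|T|$ that contradicts the KST ceiling once $c\ge 58$.

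The main obstacle is the polynomial gap: the KST ceiling $O(n^{11/4})$ exceeds the naive partial-degree lower bound $\Omega(n^2\beta)=\Omega(c^4 n^{5/2})$ by a factor $n^{1/4}$. I anticipate a case split on the densities $(e_{12},e_{13},e_{23})$ will close this gap. In the unbalanced regime where some $e_{ij}$ approaches $n^2$, inclusion--exclusion already gives $|T|=\Omega(n^3)\gg n^{11/4}$. In the balanced regime $e_{ij}\approx n^2/2$, the KST side must be sharpened, perhaps by applying Cauchy--Schwarz to $\sum_{\{u_1,u_2\}}|B|\sqrt{|A|}$ together with the identity $\sum_{\{u_1,u_2\}}|A|=\sum_{x\in V_2}\binom{d(x,V_1)}{2}$ (and its $B$-analogue) and the partial-degree bound $d(x,V_1)\ge\alpha n$, to pull the upper bound down to order $c^2 n^{5/2}$. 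This matches the $c^4$-scaled lower bound with enough slack to force a contradiction precisely when $c\ge 58$.
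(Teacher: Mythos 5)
Your plan has a genuine gap at its core, and it is the lower bound, not the KST ceiling. In the balanced regime (all $e_{ij}$ near $n^2/2$, each vertex with partial degrees near $n/2$), your inclusion--exclusion bound for the triangles through $u$ gives, per vertex,
\[
e\bigl(N_j(u),N_k(u)\bigr)\;\ge\; e_{jk}-n^2+d(u,V_j)\,d(u,V_k)\;\approx\;\Bigl(\tfrac12-1+\tfrac14\Bigr)n^2<0,
\]
which is vacuous; and no global counting bound can do better, because a graph that is a blow-up of $C_6$ with parts of size $n/2$ plus a sparse $(2t)$-regular perturbation has minimum degree $n+t$, minimum partial degree about $n/2$, and only $O(t^3)=O(c^{12}n^{3/2})$ triangles (this is why Bollob\'as--Erd\H{o}s--Szemer\'edi could only prove $f(n,t)\ge t^3$). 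With $T(G)$ possibly as small as $n^{3/2+o(1)}$, the average of $T(xy)$ over edges is $o(1)$, so the convexity step $\sum_{xy}\binom{T(xy)}{2}\ge e_{23}\binom{T/e_{23}}{2}$ yields nothing, and no amount of sharpening of the KST side (your Cauchy--Schwarz variant still gives order $n^{7/2}$ for the pair sum, since $\sum|B|^2$ can be of order $n^4$) can rescue a comparison whose other side is vacuous. Your case split only covers the extremes: some $e_{ij}=(1-o(1))n^2$ gives $T=\Omega(n^3)$ and a contradiction, but the intermediate and balanced regimes --- which contain the actual extremal structures --- are untouched. This is essentially the same obstruction that limits the global double-counting method to the $n+O(n^{5/6})$ threshold of Theorem~\ref{thmk3s}.

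The paper's proof is structural rather than enumerative, precisely to handle the $C_6$-blow-up bottleneck. It first removes the few vertices with many high-codegree neighbours (Lemma~\ref{proposition: upper bound for N^+G,alpha}, which is where KST for $K_{2,2}$ actually enters), then uses $\delta^+(G)\ge (1/5+7/c)n$ to extract an approximate $C_6$-blow-up partition $W_1,\dots,W_6,U_1,U_2,U_3$ with $|W_i|\approx\delta^+(G)$ and $d^-(w)\ge n-\delta^+(G)-o(n)$ for $w\in W_i$ (Lemma~\ref{Lemma: 2 C_6 blow-ups}), repeats with the orientation reversed to get a second blow-up, and shows that either the two blow-ups intersect (an immediate degree contradiction) or they are disjoint and together occupy too much of $V(G)$ for the minimum degree to be met without creating an edge of large codegree, whence $K_3(2)$ (Lemma~\ref{lma:C_6-close}). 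The partial-degree hypothesis is used to guarantee the blow-ups are large enough that $3\delta^++2\delta^-$ exceeds $n$; it is not used, and cannot be used, to force $\Omega(n^{5/2})$ triangles globally. If you want to salvage a counting flavour, you would at minimum need to localise the count to the common neighbourhood of a well-chosen vertex set, as in the proof of Theorem~\ref{thmk3s}, but even that only reaches exponent $5/6$.
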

	
	Bhalkikar and Zhao~\cite{bhalkikar2023subgraphs} also constructed many non-isomorphic $K_3(2)$-free tripartite graphs with minimum degree at least $n+n^{\frac{1}{2}}$. In Section~\ref{Section: Constructions}, we construct new $K_3(2)$-free tripartite graphs with minimum degree $n+(1-o(1))n^{\frac{1}{2}}$. 

	For a graph $G$, let $T(G)$ denote the number of triangles in $G$. For $n,t\in \mathbb{N}$, let $f(n,t)$ denote the minimum $T(G)$ over all $G=G_3(n)$ of minimum degree $\delta(G)$ at least $n+t$. Bollob\'{a}s, Erd\H{o}s, and Szemer\'{e}di~\cite{bollobas1975complete} also studied $f(n,t)$. They showed that $f(n,1)=4$ for $n\geq 4$, and for general $t$, they proved $f(n,t)\geq t^3$. For $t\leq n/5$, they constructed tripartite graphs $G=G_3(n)$ with $\delta(G)\geq n+t$ and $T(G)= 4t^3$, which implies $f(n,t)\leq 4t^3$. 
	Based on this, they asked that for $t\leq n/5$, is it true that $f(n,t)\geq 4t^3$? We give a simple proof of $f(n,t) \ge  n^2 (3t-n)/2$, which improves $f(n,t)\geq t^3$ when $\frac{\sqrt{3}-1}{2}n\leq t\leq n$, and gives the exact value of $f(n,t)$ for even $n$ and $t \ge n/2$. 	
	\begin{proposition}\label{2t^3 triangles}
		For $1 \le t \le n$, $f(n,t) \ge  n^2 (3t-n)/2$ and equality holds if $n$ is even and $t \ge n/2$. 
	\end{proposition}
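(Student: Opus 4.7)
The plan is to count triangles by summing over their vertex in $V_3$. Since every triangle has one vertex in each part,
$$T(G) \ =\ \sum_{w \in V_3} e_G\bigl(N(w)\cap V_1,\ N(w)\cap V_2\bigr).$$
For each $w\in V_3$, let $a_w = d_G(w,V_1)$ and $b_w = d_G(w,V_2)$. Any edge of $G[V_1,V_2]$ not counted on the right has at least one endpoint outside $N(w)$, and since each vertex has at most $n$ neighbors in any other part, a union bound gives at most $(n-a_w)n + (n-b_w)n$ such edges. Hence
$$e_G\bigl(N(w)\cap V_1,\ N(w)\cap V_2\bigr)\ \ge\ e_G(V_1,V_2) - 2n^2 + n(a_w+b_w).$$
Summing over $w\in V_3$ and using $\sum_{w\in V_3} a_w = e_G(V_1,V_3)$ and $\sum_{w\in V_3} b_w = e_G(V_2,V_3)$ yields
$$T(G)\ \ge\ n\bigl(e_G(V_1,V_2)+e_G(V_1,V_3)+e_G(V_2,V_3)\bigr) - 2n^3.$$

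Now $\delta(G)\ge n+t$ gives $e_G(V_i,V_j)+e_G(V_i,V_k) \ge n(n+t)$ by summing degrees across $V_i$, and adding over the three choices of $i$ produces $e_G(V_1,V_2)+e_G(V_1,V_3)+e_G(V_2,V_3)\ge \tfrac{3}{2}n(n+t)$. Substituting gives $T(G)\ \ge\ \tfrac{3}{2}n^2(n+t) - 2n^3\ =\ \tfrac{1}{2}n^2(3t-n)$, which is the claimed lower bound (and is trivial when $3t<n$).

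For sharpness when $n$ is even and $t\ge n/2$, I would build $G = K_{n,n,n}\setminus R$ for a suitable $(n-t)$-regular tripartite graph $R$. Split each $V_i$ into halves $V_i^+,V_i^-$ of size $n/2$, and let $R$ be the edge-disjoint union of three $(n-t)$-regular bipartite graphs, one between $V_1^+$ and $V_2^-$, one between $V_2^+$ and $V_3^-$, and one between $V_3^+$ and $V_1^-$; these exist since $n-t\le n/2$. Every vertex has exactly $n-t$ edges of $R$, so $\delta(G)=n+t$. By construction, all $R$-edges incident to any fixed vertex lie in a single bipartite block (for instance, vertices of $V_1^+$ send all their $R$-edges into $V_2^-$ and none to $V_3$), which forces every triangle of $K_{n,n,n}$ to contain at most one $R$-edge. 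Thus the number of destroyed triangles is exactly $n|R|=\tfrac{3}{2}n^2(n-t)$, giving $T(G) = n^3 - \tfrac{3}{2}n^2(n-t) = \tfrac{1}{2}n^2(3t-n)$.

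No step here looks like a genuine obstacle: the lower bound is a clean combination of the triangle identity, a union bound, and the degree-sum inequality, while the construction is just a blow-up-style removal of edges. The main piece of care is verifying that the three blocks of $R$ are pairwise triangle-disjoint in $K_{n,n,n}$, which is what allows one to equate the triangles destroyed with $n\cdot|R|$ and thereby meet the lower bound exactly.
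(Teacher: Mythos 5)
Your proof is correct and is in substance the same as the paper's: your lower bound, once the sums are collected, is exactly the inequality $T(G) \ge n\,e(G) - 2n^3$ combined with $e(G)\ge \tfrac32 n(n+t)$, which the paper obtains more directly by writing $T(G)\ge T(K_3(n)) - n\,e(\overline{G})$ for the tripartite complement $\overline{G}$ (each missing edge destroys at most $n$ triangles, and $e(\overline{G})\le 3n(n-t)/2$ since $\Delta(\overline{G})\le n-t$). Your extremal construction --- removing three pairwise triangle-disjoint $(n-t)$-regular bipartite blocks between half-classes --- is also the paper's construction, so both parts check out.
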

	
	\subsection{Notation}
	Recall that $G=G_3(n)$ is an (arbitrary) balanced tripartite graph with parts $V_1,V_2,V_3$ such that $|V_i|=n$ for $i=1,2,3$. Let $V(G)$ denote the vertex set of~$G$ and $E(G)$ denote the edge set of~$G$. 
	We consider the subscript of $V_i$ to be \emph{modulo~$3$ with values $1, 2, 3$, instead of $0, 1, 2$}.

	Given $v \in V (G)$, we write $N(v)$ for the neighborhood of $v$ and define $d_G(v) = |N(v)|$ as the degree of~$v$ in~$G$. Let $\delta(G)$ and $e(G)$ denote the minimum degree and the number of edges of~$G$ respectively. 
	We often view~$G$ as an oriented graph with edges directed from~$V_i$ to~$V_{i+1}$.
	For $v \in V_i$, let $N_G^+(v)$ (resp.~$N_G^-(v)$) be the set of vertices in~$V_{i +1}$ (resp. $V_{i- 1}$) that are joined to $v$.  Let $d_G^+(v)=|N_G^+(v)|$ and $\delta^+(G)=\min\limits_{v\in V(G)}|N_G^+(v)|$.
	We define $d_G^-(v)$ and $\delta^-(G)$ analogously. 
	For $W \subseteq V (G)$, we define $d_G(v,W)$ as the number of edges between $v$ and~$W$.

	For graphs $G$ and $H$, $G-H$ is the subgraph of~$G$ obtained by deleting edges in $E(G) \cap E(H)$. 
	For $W \subseteq V (G)$, $G[W]$ and $G \setminus W$ are the induced subgraphs of~$G$ on~$W$ and~$V(G) \setminus W$, respectively.
	We write $G[A,B]$ for the induced bipartite subgraph of $G$ with parts $A$ and~$B$. 
	We often write $\{x, y\}$ as $xy$; for $xy \in E(G)$, let $T_G(xy)$ denote the number of triangles in~$G$ that contain~$x$ and~$y$.

	Finally, let $[n]$ denote the set $\{1,\dots,n\}$ for $n\in \mathbb{N}$. Given a set $X$ and an integer $s$, we write $\binom{X}{s}$ for the set of all $s$-element subsets of $X$.
	We omit floors and ceilings whenever this does not affect the argument.

	\subsection{Organization of paper}
	In Sections~\ref{Section: Proof of Theorem thmk3s}, we prove Theorem~\ref{thmk3s} and Proposition~\ref{2t^3 triangles}.
	In Section~\ref{Section: Linear minimum partial degree}, we present the proof of  Theorem~\ref{theorem: partial degree}. 
	In Section~\ref{Section: Constructions}, we give multiple constructions of $K_3(2)$-free tripartite graphs with minimum degree $n+c'n^{\frac{1}{2}}$ for some $c'\in (0,1)$.

 \paragraph{Note added in proof.} After this paper was posted on arXiv as 2411.19773, Problem \ref{prob2} was settled by Di Braccio and Illingworth~\cite{DiBraccioIllingworth2024}, who showed that there is a constant $K$ such that every $G_3(n)$ with minimum degree at least $n + Kn^{1 - 1/s}$ contains a copy of $K_3(s)$.

	\section{Proofs of Theorem~\ref{thmk3s} and Proposition~\ref{2t^3 triangles}}\label{Section: Proof of Theorem thmk3s}
	
	Let us denote by $z(m,n;s,s)$ the Zarankiewicz number, which is the maximum number of edges in a bipartite graph $G = (U, V ; E)$ with $|U| = m$ and $|V | = n$ containing no copy of $K_{s,s}$. The following well-known result on $z(m,n;s,s)$ was proved by K\"{o}v\'{a}ri, S\'{o}s and Tur\'{a}n~\cite{kHovari1954problem}.
	\begin{lemma}[\cite{kHovari1954problem}]\label{Zarankiewicz}
		For $s,m,n \in \mathbb{N}$, 
		$z(m,n;s,s)\le (s-1)^{\frac1s} mn^{1-\frac{1}{s}} + (s-1)n.$
	\end{lemma}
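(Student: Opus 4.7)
The plan is to use the classical double-counting argument of Kővári, Sós, and Turán, counting ``stars'' from one side of the bipartition. Specifically, I would count pairs $(v,S)$ where $v\in V$ and $S\in\binom{N(v)}{s}$ with $S\subseteq U$. On the one hand, if $G$ contains no $K_{s,s}$, then every fixed $s$-subset $S\subseteq U$ can lie in $N(v)$ for at most $s-1$ vertices $v\in V$, since otherwise those $s$ vertices together with $S$ would form a $K_{s,s}$. This gives the upper bound
\[
	\sum_{v\in V}\binom{d(v)}{s}\;\le\;(s-1)\binom{m}{s}\;\le\;\frac{(s-1)m^s}{s!}.
\]

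For the lower bound I would use convexity. Write $\bar d=e(G)/n$ for the average degree on the $V$-side. It is convenient to split into two cases. If $\bar d< s-1$, then $e(G)<(s-1)n$ and the asserted inequality holds trivially (the second term already suffices). Otherwise $\bar d\ge s-1$, and the function $x\mapsto \binom{x}{s}$ is convex on $[s-1,\infty)$, so by Jensen
\[
	n\binom{\bar d}{s}\;\le\;\sum_{v\in V}\binom{d(v)}{s}.
\]
Combining the two bounds and using the elementary estimate $\binom{\bar d}{s}\ge (\bar d-s+1)^{s}/s!$ (since each factor $\bar d-k$ for $0\le k\le s-1$ is at least $\bar d-s+1$) yields
\[
	n\,(\bar d-s+1)^{s}\;\le\;(s-1)\,m^{s}.
\]

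From here I would take $s$-th roots, obtaining $\bar d\le s-1+(s-1)^{1/s}m\,n^{-1/s}$, and then multiply through by $n$ to recover $e(G)=n\bar d\le (s-1)^{1/s}\,m\,n^{1-1/s}+(s-1)n$, as required. The argument is essentially routine; the only mildly delicate point is justifying the use of convexity, which is why I separate the case $\bar d<s-1$ at the outset. Note that counting from the $U$-side would give the symmetric bound with $m$ and $n$ swapped, so the choice of which side to count from is dictated by the particular form of the stated inequality.
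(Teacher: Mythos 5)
The paper does not prove this lemma; it is quoted directly from K\H{o}v\'ari--S\'os--Tur\'an, so there is no in-paper argument to compare against. Your proposal is the standard double-counting proof of that theorem and it is correct: the count of stars $(v,S)$ with $S\in\binom{N(v)}{s}$, the bound of $s-1$ extensions per $s$-subset of $U$ in the absence of $K_{s,s}$, the convexity step (with the case $\bar d<s-1$ correctly split off so that Jensen applies to the zero-extended $\binom{x}{s}$), and the final estimate $\binom{\bar d}{s}\ge(\bar d-s+1)^s/s!$ all check out and yield exactly the stated asymmetric bound. Nothing to fix.
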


	Next we prove Theorem~\ref{thmk3s} by double counting and Lemma~\ref{Zarankiewicz}. This approach is similar to the one used in~\cite{bhalkikar2023subgraphs, bollobas1975complete} and our improvement on $\delta(G)$ is due to the counting of  triangles $xyz$ with $x\in T_1$ and $y\in T_x$ instead of $x\in V_1$ and $y\in V_3$ as in~\cite{bhalkikar2023subgraphs, bollobas1975complete}.
	
	\begin{proof}[\textbf{Proof of Theorem~\ref{thmk3s}}]
		Let $t=Cn^{1-\frac{1}{s(s+1)}}$, where $C= 2 (s-1)^{\frac1{s+1}}$. 
		Let $d_G^+(S)= \sum_{x\in S} d^+_G(x)$ for any set $S\subseteq V(G)$.
		Without loss of generality, suppose that there is a subset $T_1\subseteq V_1$ of size $t$ satisfying 
		\begin{align*}
			d_G^+(T_1) = \max \{ d_G^+(T) \colon \text{$|T|= t$, $T\subseteq V_i$, $i\in [3]$}  \}. 
		\end{align*}
		For $x\in V_1$, let $T_x\subset V_3$ be an arbitrary $t$-element subset of $N_G^-(x)$. 
		Note that $\delta(G)\ge n+t$ guarantees that $T_x$ exists.
		For an edge $xy$, let $T(xy)$ be the number of triangles containing~$xy$. 
		Since $d^+(y) + d^-(y) \ge n+ t$ for any $y\in V(G)$, we have 
		\begin{align*}
			\sum\limits_{x\in T_1,y\in T_x} T (xy) & \geq \sum\limits_{x\in T_1}\sum\limits_{y\in T_x}\left(d_G^+(x)+d_G^-(y)-n\right)
			\geq \sum\limits_{x\in T_1}\sum\limits_{y\in T_x}\left(t+d_G^+(x)-d_G^+(y)\right)\\
			&= \sum\limits_{x\in T_1}\left(t^2+td_G^+(x)-  	d_G^+(T_x) \right)
			= t^3 + t d^+_G(T_1) -  \sum\limits_{x\in T_1} d_G^+(T_x) .
		\end{align*}
		Using the maximality of $d_G^+(T_1)$, we derive that		
		\begin{align*}
			\sum\limits_{x\in T_1,y\in T_x} T (xy) 
			\ge  t^3+t d_G^+(T_1) -t d_G^+(T_1)
			=t^3.
		\end{align*}
		For any $s$ distinct vertices $z_1,\dots,z_s\in V_2$, let 
		\begin{align*}
			\mathcal{T}(z_1,\dots,z_s) = \left\{ xy\in E(G): x\in T_1,y\in T_x, G[\{x, y, z_i\}] \text{ is an triangle for all } i\in [s] \right\}.
		\end{align*}		
		By double counting and convexity, we have
		\begin{align*}
			\sum\limits_{\{z_1,\dots,z_s\}\in\binom{V_2}{s}} |\mathcal{T}(z_1,\dots,z_s)|&= \sum\limits_{x\in T_1,y\in T_x}\binom{T(xy)}{s}
			\geq t^2\binom{ \frac{1}{t^2}{\sum\limits_{x\in T_1,y\in T_x}T(xy)}}{s}
			\geq t^2\binom{t}{s}.
		\end{align*}
		By averaging, there exist $s$ distinct vertices $z_1,\dots,z_s\in V_2$ such that
		\begin{align*}
			|\mathcal{T}(z_1,\dots,z_s)|&\geq \frac{t^2\binom{t}{s}}{\binom{n}{s}}
			\geq \frac{t^{s+2}}{2^sn^s}
			= \frac{C^{s+1}}{2^s} t n^{1-\frac{1}{s}} > \left(\frac{C}{2} \right)^{s+1} \left( t n^{1-\frac{1}{s}} + n \right)\\
			&\ge (s-1)t n^{1-\frac{1}{s}} + (s-1) n >  z(t, n; s, s)
					\end{align*}
			by Lemma~\ref{Zarankiewicz}.
		Thus, the bipartite graph on $T_1\cup V_3$ with the edge set $\mathcal{T}(z_1,\dots,z_s)$ contains a copy of $K_{s, s}$. Together with $z_1,\dots,z_s$, this gives the desired copy of  $K_3(s)$ in $G$.		
	\end{proof}

We next prove Proposition~\ref{2t^3 triangles}.	
	
	\begin{proof}[\textbf{Proof of Proposition~\ref{2t^3 triangles}}]
		Let $G = G_3(n)$ with $\delta(G) \ge n+t$. 
		Let $\overline{G}$ be the tripartite complement graph of~$G$, that is $K_3(n) - G$, where $K_3(n)$ has the same vertex classes as~$G$. 
		Note that $\Delta(\overline{G}) \le n-t$ and $e(\overline{G}) \le 3n(n-t)/2$. 
		Since each edge is in at most $n$ triangles, so 
		\begin{align*}
			T(G) \ge T(K_3(n)) - n e(\overline{G}) 
			\geq n^3 - \frac{3n^2(n-t)}2
			= \frac12 n^2 (3t-n).
		\end{align*}
		Thus we have $f(n,t) \ge n^2 (3t-n)/2$.

		Suppose that $n$ is even and $t \ge n/2$. 
		Let $A_1, B_1, A_2, B_2, A_3,B_3$ be disjoint vertex sets each of size~$n/2$. 
		For $i \in [3]$, let $V_i = A_i \cup B_i$. 
		Let $H$ be a tripartite graph with vertex classes $V_1, V_2, V_3$ such that $H = \bigcup_{i \in [3]} H[A_{i+1},B_{i}]$ and each $H[A_{i+1},B_{i}]$ is $(n-t)$-regular. 
		Let $G$ be the tripartite complement graph of~$H$, so $G$ is $(n+t)$-regular. 
		Since no triangle in $K_3(n)$ contains two edges of~$H$, by the calculation above we have $T(G) = n^2 (3t-n)/2$.
	\end{proof}

	\section{Proof of Theorem~\ref{theorem: partial degree}}\label{Section: Linear minimum partial degree}
	
	We now sketch the proof of Theorem~\ref{theorem: partial degree}.
	Suppose that $G$ is $K_3(2)$-free with minimum partial degree $\beta n\geq (1/5 + 7/c )n$. 
	Using the fact that $\delta^+(G) \ge \beta n$, we show that $G$ contains a blow-up of~$C_6$ with parts of size $\beta n +o(n)$ (Lemma~\ref{Lemma: 2 C_6 blow-ups}). 
	Moreover, for each vertex~$v$ in this $C_6$-blow-up, we have $d^+(v) \le \beta n +o(n)$ and $d^-(v) \ge n- \beta n +o(n)$.
	Similarly, by $\delta^-(G) \ge \beta n$, we obtain another $C_6$-blow-up with similar properties. 
	If these two $C_6$-blow-ups intersect, then it leads to a contradiction immediately. Otherwise, we use Lemma \ref{lma:C_6-close} to deduce that $G$ contains a~$K_3(2)$ and thus complete the proof.

	We begin with a definition. 
	Recall that $G=G_3(n)$ is viewed as an oriented graph with edges from $V_i$ to $V_{i+1}$ for $i\in [3]$. 
	For $v\in V(G)$ and $\alpha >0$, let 
	\begin{align*}
		\widetilde{D}^+_{ G,\alpha }(v)=\left \{ w \in N^+_G(v): T(vw) \geq \alpha n \right\}, \\ \widetilde{D}^-_{ G,\alpha }(v)=\left \{ w \in N^-_G(v): T(vw) \geq \alpha n \right\}.
	\end{align*}
	The following lemma shows that only a small number of vertices $w\in V(G)$ can have large $\widetilde{D}^+_{ G,\alpha }(w)$ or $\widetilde{D}^-_{ G,\alpha }(w)$.

	\begin{lemma}\label{proposition: upper bound for N^+G,alpha}
		Let $G=G_3(n)$ and $k > 1$.
		Suppose that for some $i\in [3],$ there exists a vertex subset $W_i\subseteq V_i$ of size $|W_i|\geq k^2n^{\frac12}$ such that either $|\widetilde{D}^+_{G,2/k}(w)|\geq k^2n^{\frac{1}{2}}$ for all $w\in W_i$ or $|\widetilde{D}^-_{G,2/k}(w)|\geq k^2n^{\frac{1}{2}}$ for all $w\in W_i$. Then $G$ contains a~$K_3(2)$.
	\end{lemma}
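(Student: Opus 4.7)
The plan is to argue by contradiction, assuming $G$ is $K_3(2)$-free. By cyclically relabeling the parts and, if necessary, reversing the orientation of $G$, we may reduce to the case $i = 1$ and $W_1 \subseteq V_1$ with $|\widetilde{D}^+_{G,2/k}(v)| \ge k^2 n^{1/2}$ for every $v \in W_1$. Form the auxiliary bipartite graph $B$ on $W_1 \cup V_2$ whose edge set is $\{vw : v \in W_1,\, w \in \widetilde{D}^+_{G,2/k}(v)\}$. Then $|E(B)| \ge |W_1| k^2 n^{1/2}$, and every $e = vw \in E(B)$ satisfies $T(e) \ge 2n/k$.

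For each $u \in V_3$, let $B_u$ denote the subgraph of $B$ induced on $N(u) \cap (W_1 \cup V_2)$. The key observation is: if for some $u_1 \ne u_2 \in V_3$ the intersection $B_{u_1} \cap B_{u_2}$ contains a $K_{2,2}$ with parts $\{v_1,v_2\} \subseteq W_1$ and $\{w_1,w_2\} \subseteq V_2$, then the six vertices $v_1,v_2,w_1,w_2,u_1,u_2$ span a copy of $K_3(2)$ in $G$ (the four $v_iw_j$-edges come from $B \subseteq E(G)$, while $v_iu_\ell$ and $w_ju_\ell$ are forced by $v_i, w_j \in N(u_\ell)$). Under the $K_3(2)$-free hypothesis, each $B_{u_1} \cap B_{u_2}$ is therefore $K_{2,2}$-free, so Lemma~\ref{Zarankiewicz} with $s=2$ yields $|E(B_{u_1} \cap B_{u_2})| \le |W_1| n^{1/2} + n$.

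The contradiction comes from the double counting
\[
\sum_{\substack{(u_1,u_2) \in V_3^2 \\ u_1 \ne u_2}} |E(B_{u_1} \cap B_{u_2})| \;=\; \sum_{e \in E(B)} T(e)\bigl(T(e)-1\bigr).
\]
The left-hand side is at most $n(n-1)\bigl(|W_1|n^{1/2} + n\bigr) \le |W_1| n^{5/2} + n^3$. On the right, $T(e)(T(e)-1) \ge (2n/k)^2 - n$ on each of the $\ge |W_1|k^2 n^{1/2}$ edges of $B$, yielding a lower bound of $4|W_1| n^{5/2} - |W_1| k^2 n^{3/2}$. Rearranging gives $|W_1|(3n-k^2) \le n^{3/2}$; combined with $|W_1| \ge k^2 n^{1/2}$, this forces $(3k^2 - 1)n \le k^4$. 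But $|W_1| \le n$ already implies $n \ge k^4$, and $k > 1$ makes $3k^2 - 1 > 2$, so $(3k^2 - 1)n > k^4$, a contradiction. The only substantive step is the auxiliary graph $B$ together with the observation that $K_3(2)$-freeness of $G$ translates to $K_{2,2}$-freeness of each $B_{u_1} \cap B_{u_2}$; the remainder is straightforward bookkeeping to check that the heavy-edge lower bound on $\sum_e T(e)(T(e)-1)$ beats the Kővári–Sós–Turán upper bound.
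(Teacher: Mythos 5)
Your proof is correct and reaches the conclusion by a genuinely different, essentially dual, double count. The paper also builds an auxiliary bipartite graph, but between $W_i$ and the product set $V_{i+1}\times V_{i+2}$: it pairs up vertices of $W_i$, finds two vertices $w,w'$ whose common neighbourhood in that product is large, observes that this common neighbourhood is a set of edges of $G[W(w),V_{i+2}]$, and applies Lemma~\ref{Zarankiewicz} once to that edge set to extract a $K_{2,2}$ which, together with $w,w'$, yields the $K_3(2)$. You instead pair up the apices $u_1,u_2\in V_3$, translate $K_3(2)$-freeness of $G$ into $K_{2,2}$-freeness of every $B_{u_1}\cap B_{u_2}$, and push the count to a global contradiction. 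The two arguments swap which vertex class supplies the ``pair'' and which supplies the $K_{2,2}$; yours avoids truncating the sets $\widetilde{D}^+$ and singling out one good pair, at the cost of invoking the Zarankiewicz bound for all $n(n-1)$ ordered pairs rather than once. Both rest on the same convexity-plus-K\H{o}v\'ari--S\'os--Tur\'an mechanism and give the same quantitative conclusion.

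One small arithmetic slip: from $T(e)\ge 2n/k$ you infer $T(e)\bigl(T(e)-1\bigr)\ge (2n/k)^2-n$, which needs $2n/k\le n$, i.e.\ $k\ge 2$, whereas the lemma only assumes $k>1$. Use the exact bound $T(e)\bigl(T(e)-1\bigr)\ge (2n/k)^2-2n/k$ instead; the rearrangement then gives $|W_1|(3n-2k)\le n^{3/2}$ and hence $(3k^2-1)n\le 2k^3$, which still contradicts $n\ge k^4$ and $k>1$. With that correction the argument is complete.
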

	\begin{proof}
		We only prove the $|\widetilde{D}^+_{G,2/k}(w)|\geq k^2n^{\frac{1}{2}}$ case because the proof of the $|\widetilde{D}^-_{G,2/k}(w)|\geq k^2n^{\frac{1}{2}}$ case is similar. Without loss of generality, we can assume $|W_i| = k^2 n^{\frac{1}{2}}$.
		For each $w\in W_i$, let $W(w)\subseteq \widetilde{D}^+_{G,2/k} (w)$ be of size~$k^2n^{\frac{1}{2}}$. 
		
		Define an auxiliary bipartite graph~$H$ with parts $W_i$ and $V_{i+1}\times V_{i+2}$ such that for $w\in W_i$ and $(v_{i+1},v_{i+2}) \in V_{i+1}\times V_{i+2}$, $w(v_{i+1},v_{i+2})$ is an edge of~$H$ if $v_{i+1}\in W(w)$ and $wv_{i+1}v_{i+2}$ forms a triangle in $G$.
			For all $w\in W_i$, $d_H(w) \geq k^2 n^{\frac{1}{2}} \cdot (2/k) n = 2 k n^{\frac{3}{2}}$, so $e(H)\geq 2k^{3}n^2$.
		Thus
		\begin{equation*}
			\begin{aligned}
				\sum\limits_{\{w_i,w_i'\}\in \binom{W_i}{2}}|N_H(w_i)\cap N_H(w_i')| &=  \sum\limits_{(v_{i+1},v_{i+2})\in V_{i+1}\times V_{i+2}}\binom{d_H\left((v_{i+1},v_{i+2})\right)}{2}\\
				& \geq |V_{i+1}\times V_{i+2}|\binom{e(H)/|V_{i+1}\times V_{i+2}|}{2}
				\geq n^2\binom{2k^3}{2}
				\geq k^6n^2.
			\end{aligned}
		\end{equation*}
		By averaging, there exists $\{w_i,w_i'\} \in \binom{W_i}{2}$ with $|N_H(w_i)\cap N_H(w_i')|> 2k^2n$.
		Note that $N_H(w_i)\cap N_H(w_i')$ can be viewed as a subgraph of $G[W(w_i),V_{i+2}]$.
		By Lemma~\ref{Zarankiewicz}, we have 
		\begin{align*}
			z\left(k^2 n^{\frac{1}{2}},n;2,2\right)\leq k^2n+n < |N_H(w_i)\cap N_H(w_i')|,
		\end{align*}
		implying $K_3(2)\subseteq G$.
	\end{proof}
	
	Now we introduce two lemmas and postpone their proofs to the next two subsections. The first lemma gives the structure of $K_3(2)$-free tripartite graph with $\delta(G)\geq n$ and linear partial degree. 
	
	\begin{lemma}\label{Lemma: 2 C_6 blow-ups}
		Let $\eps >0 $ and $n$ be sufficiently large.
		Suppose $G=G_3(n)$ is a tripartite graph with $\delta(G)\geq n$ and $\delta^+(G)\ge 2\eps n$. Further, assume that $T(uv)\le \left(\frac{\varepsilon}{30}\right)^2n$ for all edges $uv$ in~$G$. 
		Then either $G$ contains a~$K_3(2)$ or there exists 
		a partition $\mathcal{P}=\{W_1,\dots,W_6,U_1,U_2, U_3\}$ of $V(G)$ such that for $i\in [6]$ and $j\in [3]$,
		\begin{enumerate}[label = {\rm (\alph*)}]
			\item $W_j,W_{j+3},U_j \subseteq V_j$;
			\label{itm:C6:1}
			
			\item $\delta^+(G) -\eps n \leq |W_i| \le \delta^+(G) + \eps n$; \label{itm:C6:2a}	
			\item 		\label{itm:C6:3}		 for all $w\in W_i$, $S\in \{W_{i-1},W_{i+1},U_{i-1}\}$ and $S'\in \mathcal{P}\setminus \{W_{i-1},W_{i+1},U_{i-1}\}$ (the subscript of $W_i$ is modulo $6$ with values $1, \dots, 6$ while the subscript of $U_i$ is modulo $3$ with values $1,2, 3$), we have $d_G(w,S)\geq |S| - \eps n$, and $d_G(w,S') \le \eps n$;
			
			\item for all $w\in W_i$, $
			d_G^+(w) \leq \delta^+(G) + \eps n  $ and $ d_G^-(w) \ge n - \delta^+(G) - \eps n $.
			\label{itm:C6:4}
		\end{enumerate}
	\end{lemma}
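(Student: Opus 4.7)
The plan is to assume $G$ is $K_3(2)$-free and build the partition $\mathcal{P}$ explicitly by three iterations of a "forward-neighbourhood, backward-non-neighbourhood" process around the target~$C_6$. Set $d := \delta^+(G) \ge 2\eps n$ and, without loss of generality, pick $v_1^* \in V_1$ with $d^+(v_1^*) = d$. Define
\[
W_2 := N^+(v_1^*) \subseteq V_2, \qquad W_3 := V_3 \setminus N^-(v_1^*) \subseteq V_3.
\]
Since $\delta(G) \ge n$ gives $d^-(v_1^*) \ge n - d$, we have $|W_2| = d$ and $|W_3| \le d$. For every $y \in W_2$, the identity $T(v_1^* y) = |N^+(y) \cap N^-(v_1^*)|$ combined with the hypothesis $T(v_1^* y) \le (\eps/30)^2 n$ and $d^+(y) \ge d$ forces $|N^+(y) \cap W_3| \ge d - (\eps/30)^2 n$, hence $|W_3| \ge d - (\eps/30)^2 n$, $d^+(y) \le d + (\eps/30)^2 n$, and every $y \in W_2$ is adjacent to all but $(\eps/30)^2 n$ vertices of~$W_3$.

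Next, averaging $|N^+(y) \cap W_3|$ over $y \in W_2$ shows that a typical $z \in W_3$ satisfies $d_G(z, W_2) \ge d - O((\eps/30)^2 n)$, and applying the triangle-sparsity hypothesis to edges $yz$ forces $d^+(z) \le d + O((\eps/30)^2 n)$ for such $z$. I would then choose $v_3^* \in W_3$ with these properties, iterate once to obtain $W_4 := N^+(v_3^*) \subseteq V_1$ and $W_5 := V_2 \setminus N^-(v_3^*) \subseteq V_2$ with the analogous bounds, and iterate once more from a typical $v_5^* \in W_5$ to obtain $W_6 := N^+(v_5^*) \subseteq V_3$ and $W_1 := V_1 \setminus N^-(v_5^*) \subseteq V_1$. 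A key observation is that $W_2 \cap W_5$ has size $O(\eps n)$, because almost every $y \in W_2$ is a backward neighbour of the chosen $v_3^*$; after subtracting this small intersection from $W_5$, and symmetrically cleaning up $W_3 \cap W_6$ and $W_1 \cap W_4$, the six $W_i$'s become pairwise disjoint, and $v_1^* \in W_1$ follows from $v_1^* v_5^* \notin E(G)$ (since $v_5^* \notin W_2 = N^+(v_1^*)$).

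Setting $U_j := V_j \setminus (W_j \cup W_{j+3})$ for $j \in [3]$ completes the partition. Property \ref{itm:C6:2a} is immediate from the construction. For \ref{itm:C6:3}, the forward adjacency $d_G(w, W_{i+1}) \ge |W_{i+1}| - \eps n$ is built into each iteration, and the backward adjacencies $d_G(w, W_{i-1})$ and $d_G(w, U_{i-1})$ follow from $d^-(w) \ge n - d^+(w) \ge n - d - O(\eps n)$ together with $|W_{i-1} \cup U_{i-1}| \approx n - d$. The non-adjacency bound $d_G(w, S') \le \eps n$ for $S' \notin \{W_{i-1}, W_{i+1}, U_{i-1}\}$ then drops out by saturation of the forward and backward degree budgets, and \ref{itm:C6:4} is a direct consequence.

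The main technical obstacle will be controlling the accumulated slack: each iteration introduces a new additive error of order $(\eps/30)^2 n$, and one must check that after three iterations and the clean-up steps the total error stays comfortably below the required~$\eps n$. The constant $(\eps/30)^2$ in the triangle-sparsity hypothesis is chosen precisely to absorb these losses. A secondary difficulty is ruling out a "twisted" closure in which the $W_1$ produced by the third iteration fails to match the natural $W_1$ anchored at~$v_1^*$; this will be handled by invoking $K_3(2)$-freeness together with a Zarankiewicz-type estimate in the spirit of Lemma~\ref{proposition: upper bound for N^+G,alpha}, which bounds the pairwise intersections such as $W_2 \cap W_5$ and prevents systematic drift around the~$C_6$.
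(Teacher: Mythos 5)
Your overall architecture is sound and is essentially a single-pass version of what the paper does (the paper follows forward neighbourhoods along a directed path of length $12$, i.e.\ it goes around the hexagon \emph{twice}, precisely to avoid the two issues below), but as written the proposal has two genuine gaps.

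First, the closure of the cycle. Your clean-up of $W_2\cap W_5$ and $W_1\cap W_4$ is fine: each is of the form $N^+(x)\setminus N^-(y)$ where $y$ is a typical vertex two steps further along, and averaging shows a typical such $y$ is backward-adjacent to almost all of $N^+(x)$. But $W_3\cap W_6=N^+(v_5^*)\setminus N^-(v_1^*)$ is \emph{not} symmetric to these: here $v_1^*$ is the fixed starting vertex, not a vertex chosen typically after $W_6$ was defined, and no averaging over $W_1$ lets you conclude anything about the specific vertex $v_1^*$. The fact that $v_1^*\in W_1$ does not imply $N^+(v_5^*)$ lands inside $N^-(v_1^*)$. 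Your proposed fix --- ``$K_3(2)$-freeness together with a Zarankiewicz-type estimate in the spirit of Lemma~\ref{proposition: upper bound for N^+G,alpha}'' --- is not the right mechanism; the paper's proof of Lemma~\ref{Lemma: 2 C_6 blow-ups} uses neither Zarankiewicz nor $K_3(2)$-freeness in any essential way for this step. What actually closes the cycle is a chain argument using only $T(uv)\le\alpha n$: e.g.\ every $z'\in W_6$ has $d^+(z',W_4)=O(\sqrt\alpha\,n)$ (since $v_5^*$ is backward-adjacent to almost all of $W_4$ and $T(v_5^*z')\le\alpha n$), whereas a typical $z\in W_3$ has $d^+(z,W_4)\ge|W_4|-O(\sqrt\alpha\,n)$, so $W_3\cap W_6$ sits inside the atypical part of $W_3$. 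This is exactly the role of Claims (iv)--(vii) in the paper, and it must be supplied; it does not follow from anything you have written.

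Second, the backward adjacency in \ref{itm:C6:3}. You assert that $d_G(w,W_{i-1})\ge|W_{i-1}|-\eps n$ ``follows from $d^-(w)\ge n-d-O(\eps n)$ together with $|W_{i-1}\cup U_{i-1}|\approx n-d$.'' It does not: $w$ may miss up to $d+O(\eps n)\ge 2\eps n$ backward neighbours, and since $|W_{i-1}|\approx d$, pure degree counting allows all of $W_{i-1}$ to be missed. One must first prove $d(w,W_{i+2})=O(\alpha n)$ --- i.e.\ that essentially all of $w$'s backward non-neighbours lie in $W_{i+2}$ --- which again needs the triangle hypothesis applied to an edge $wu$ with $u\in W_{i+1}$ (the paper's Claim (iv)). This sub-gap is routine to fill, but the first gap is the heart of the lemma and is not resolved by your proposal.
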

	
The second lemma deals with the case when $G$ contains two $C_6$ blow-ups. 
	
	\begin{lemma}\label{lma:C_6-close}
Let $1<c\le n^{\frac{1}{6}}$ and $n\in \mathbb{N}$. 
Let $G=G_3(n)$ be a tripartite graph with $\delta(G)\geq n + 28 c^2 n^{\frac{1}{2}}$. Let $d=d(c,n)$ be a non-negative integer such that $\delta^+(G)\ge d$ and $3\delta^+(G)+2d\ge n+26c^{-1}n$. 
Suppose that there exist disjoint vertex sets $W_1,\dots,W_6,X_1,\dots,X_6$ (where $X_i$ can be empty for $i\in [6]$ and the subscripts of $X_i$ are modulo $6$) such that 
\begin{enumerate}[label = {\rm (\roman*)}]
	\item for all $j \in [3]$, $W_j, W_{j+3},X_{j},X_{j+3} \subseteq V_j$; \label{itm:c6close:2}
	\item for all $i \in [6]$, $\delta^+(G)-c^{-1}n \le |W_i| \le \delta^+(G)+c^{-1}n $ and $d-c^{-1}n \le |X_i| \le d+c^{-1}n $; \label{itm:c6close:1}
	\item 
	\label{itm:c6close:3}
	for all  $ i \in [6]$, $w \in W_i$ and $x \in X_i$, we have 
	\begin{align*}
	d(w,S) & \ge |S|- c^{-1} n & \text{if $S \in \{W_{i-1}, W_{i+1}, X_{i-1}, X_{i-4}\}$},\\
	d(x,S) & \ge |S|- c^{-1} n & \text{if $S \in \{X_{i-1}, X_{i+1}, W_{i+1}, W_{i+4}\}$}.
\end{align*}	
\end{enumerate}
Then $G$ contains a $K_{3}(2)$.
\end{lemma}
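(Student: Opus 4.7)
The plan is to argue by contradiction. Suppose $G$ is $K_3(2)$-free. The goal is to locate two non-adjacent vertices $u_1, u_2$ in the same part $V_j$ together with a $K_{2,2}$ in the bipartite graph $G[A, B]$, where $A := N(u_1) \cap N(u_2) \cap V_{j+1}$ and $B := N(u_1) \cap N(u_2) \cap V_{j-1}$; such a configuration immediately gives a $K_3(2)$ on $\{u_1, u_2\} \cup V(K_{2,2})$, contradicting the assumption.

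The natural candidate pairs from condition~(iii) are: (a) two vertices in the same $W_i$ (or the same $X_i$); (b) one vertex in $W_i$ and one in $X_i$ (both in $V_j$); or (c) $u_1 \in W_i$ and $u_2 \in X_{i+3}$ (both in $V_j$). Each choice yields guaranteed common neighborhoods in $V_{j\pm 1}$ of different shapes. For instance, in case~(c) we get $A \supseteq W_{i+1} \cap N(u_1) \cap N(u_2)$ of size at least $\delta^+(G) - 3c^{-1}n$ and $B \supseteq X_{i+2} \cap N(u_1) \cap N(u_2)$ of size at least $d - 3c^{-1}n$ (using $X_{i-4} = X_{i+2}$ modulo~$6$). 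For each such pair I would then estimate $e(G[A, B])$ using the degree surplus: each $a \in A$ has $d(a) \ge n + 28 c^2 n^{1/2}$ and $d(a, V_j) \le n$, so at least $28 c^2 n^{1/2}$ of $a$'s degree lies in $V_{j-1}$; the hypothesis $3\delta^+(G) + 2d \ge n + 26 c^{-1}n$ prevents the ``forced'' neighborhoods from entirely filling $V_{j-1}$, leaving room to locate much of this surplus inside $B$. Averaging over the six choices of $i \in [6]$ and the three pair types, at least one configuration must give $e(G[A, B]) > |A| |B|^{1/2} + |B|$, whereupon Lemma~\ref{Zarankiewicz} (with $s=2$) produces the desired $K_{2,2}$.

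The main obstacle is precisely the edge-density estimate. Condition~(iii) only supplies lower bounds on $d(a, S)$ for a handful of structured sets $S$, so the degree surplus of vertices in $A$ could \emph{a priori} land entirely in ``non-guaranteed'' locations disjoint from the intended $B$. Circumventing this likely requires a case split on the relative sizes of $d$ and $\delta^+(G)$, with a different pair type handling each regime, together with a pigeonhole argument on where the aggregate surplus concentrates. When the surplus concentrates in the leftover set $V(G) \setminus (\bigcup_i W_i \cup \bigcup_i X_i)$, one would instead apply Lemma~\ref{Zarankiewicz} to $G[W_i, \text{leftover}]$ to obtain a $K_{2,2}$, then close the argument by completing from the $W$-blowup structure: the two $a_l$'s lying in some $W_{i+1}$ share a guaranteed common $V_{j-1}$-neighborhood of size $\ge \delta^+(G) - O(c^{-1}n)$ in $W_{i+2}$, in which the required pair $u_1, u_2$ can always be found.
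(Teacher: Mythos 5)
There is a genuine gap: the single-pair strategy has no source for the edges it needs. For every pair type you list, the guaranteed structure from \ref{itm:c6close:3} contributes essentially \emph{zero} edges between $A$ and $B$. For instance, with $u_1,u_2\in W_i$ you get $A\approx W_{i+1}$ and $B\approx W_{i-1}=W_{i+5}$, but the forced out-neighbourhood of a vertex of $W_{i+1}$ is $W_{i+2}$, not $W_{i+5}$; similarly in your case (c), vertices of $W_{i+1}$ have no forced neighbours in $X_{i+2}$ and vice versa. In the extremal $C_6$-blow-up picture (Figure~1 of the paper) these bipartite graphs are genuinely empty. The only other resource is the degree surplus: each $a\in A\subseteq V_{j+1}$ has $d(a,V_{j-1})\ge 28c^2n^{1/2}$, but $B$ is a proper subset of $V_{j-1}$ of size $\Theta(n)$ and nothing forces any of that surplus (nor the possibly linear amount of ``free'' degree $n-2\delta^+(G)-2d$) to land in $B$ rather than elsewhere in $V_{j-1}$. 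You identify this obstacle yourself, but the proposed remedies do not overcome it: the pigeonhole/case-split is not carried out, and the fallback of finding a $K_{2,2}$ between $W_i$ and the leftover set does not close to a $K_3(2)$, since the two leftover vertices carry no structural information and need not share two common neighbours with the two $W_i$-vertices in the third part.

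The paper's proof works around exactly this difficulty by never fixing a single pair $u_1,u_2$. Instead it uses Lemma~\ref{proposition: upper bound for N^+G,alpha}, which aggregates over \emph{many} vertices each having many ``heavy'' edges (edges lying in $\ge 2c^{-1}n$ triangles) and extracts the $K_{2,2}$ by convexity plus K\H{o}v\'ari--S\'os--Tur\'an. Concretely: leftover vertices are classified according to which structured sets $W_j,X_j$ absorb $\ge 3c^{-1}n$ of their degree; a vertex whose degree concentrates on an ``incompatible'' pair (one of $24$ listed pairs, e.g.\ $\{W_j,W_{j+1}\}$) already produces many heavy edges, and the remaining good vertices are absorbed into enlarged sets $W_i^*,X_i^*$ covering all but $O(c^2n^{1/2})$ of $V(G)$. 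An averaging over $i\in[6]$ then finds a class $W_1$ whose compatible neighbourhood has size at most $n$, so the condition $\delta(G)\ge n+28c^2n^{1/2}$ forces every $w\in W_1$ to send $\ge 4c^2n^{1/2}$ edges into incompatible starred sets; each such edge is heavy because both endpoints almost dominate the same $W_j$, and Lemma~\ref{proposition: upper bound for N^+G,alpha} finishes. This aggregation step, together with the incompatibility classification and the averaging over $i$, is the content your proposal is missing.
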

	\begin{figure}[H]
	\centering
	\begin{tikzpicture}[scale=1.5]
	\def\x{0}
	\def\X{0}
	\def\l{1}
	\def\h{3.3}
	\def\skip{1.5}
	\def\y{1.2}
	\def\Y{-1.5}
	
	\node at (\x,\X-0.2) {$V_1$};
	\draw (\x-\l/2,\X) rectangle (\x+\l/2,\X+\h);

 \node at (\x+\skip,\X-0.2) {$V_2$};
	\draw (\x-\l/2+\skip,\X) rectangle (\x+\l/2+\skip,\X+\h);

 \node at (\x+2*\skip,\X-0.2) {$V_3$};
	\draw (\x-\l/2+2*\skip,\X) rectangle (\x+\l/2+2*\skip,\X+\h);

\node[circle, fill=black, inner sep=0.001] (W1) at (\x+0.3,\X+5.3*\h/6) {};
\node at (\x,\X+5.3*\h/6) {$W_1$};
 \draw (\x,\X+5.3*\h/6) circle (0.3);

\node[circle, fill=black, inner sep=0.001] (W4) at (\x+0.3,\X+3.8*\h/6) {};
\node at (\x,\X+3.8*\h/6) {$W_4$};
 \draw (\x,\X+3.8*\h/6) circle (0.3);

\node[circle, fill=black, inner sep=0.001] (X1) at (\x+0.3,\X+2.3*\h/6) {};
\node at (\x,\X+2.3*\h/6) {$X_1$};
 \draw (\x,\X+2.3*\h/6) circle (0.3);

\node[circle, fill=black, inner sep=0.001] (X4) at (\x+0.3,\X+0.8*\h/6) {};
\node at (\x,\X+0.8*\h/6) {$X_4$};
 \draw (\x,\X+0.8*\h/6) circle (0.3);

\node[circle, fill=black, inner sep=0.001] (RW2) at (\x+0.3+\skip,\X+5.3*\h/6) {};
\node[circle, fill=black, inner sep=0.001] (LW2) at (\x-0.3+\skip,\X+5.3*\h/6) {};
\node at (\x+\skip,\X+5.3*\h/6) {$W_2$};
 \draw (\x+\skip,\X+5.3*\h/6) circle (0.3);

\node[circle, fill=black, inner sep=0.001] (RW5) at (\x+0.3+\skip,\X+3.8*\h/6) {};
\node[circle, fill=black, inner sep=0.001] (LW5) at (\x-0.3+\skip,\X+3.8*\h/6) {};
\node at (\x+\skip,\X+3.8*\h/6) {$W_5$};
 \draw (\x+\skip,\X+3.8*\h/6) circle (0.3);

\node[circle, fill=black, inner sep=0.001] (RX2) at (\x+0.3+\skip,\X+2.3*\h/6) {};
\node[circle, fill=black, inner sep=0.001] (LX2) at (\x-0.3+\skip,\X+2.3*\h/6) {};
\node at (\x+\skip,\X+2.3*\h/6) {$X_2$};
 \draw (\x+\skip,\X+2.3*\h/6) circle (0.3);

\node[circle, fill=black, inner sep=0.001] (RX5) at (\x+0.3+\skip,\X+0.8*\h/6) {};
\node[circle, fill=black, inner sep=0.001] (LX5) at (\x-0.3+\skip,\X+0.8*\h/6) {};
\node at (\x+\skip,\X+0.8*\h/6) {$X_5$};
 \draw (\x+\skip,\X+0.8*\h/6) circle (0.3);

\node[circle, fill=black, inner sep=0.001] (W3) at (\x-0.3+2*\skip,\X+5.3*\h/6) {};
\node at (\x+2*\skip,\X+5.3*\h/6) {$W_3$};
 \draw (\x+2*\skip,\X+5.3*\h/6) circle (0.3);

\node[circle, fill=black, inner sep=0.001] (W6) at (\x-0.3+2*\skip,\X+3.8*\h/6) {};
\node at (\x+2*\skip,\X+3.8*\h/6) {$W_6$};
 \draw (\x+2*\skip,\X+3.8*\h/6) circle (0.3);

\node[circle, fill=black, inner sep=0.001] (X3) at (\x-0.3+2*\skip,\X+2.3*\h/6) {};
\node at (\x+2*\skip,\X+2.3*\h/6) {$X_3$};
 \draw (\x+2*\skip,\X+2.3*\h/6) circle (0.3);

\node[circle, fill=black, inner sep=0.001] (X6) at (\x-0.3+2*\skip,\X+0.8*\h/6) {};
\node at (\x+2*\skip,\X+0.8*\h/6) {$X_6$};
 \draw (\x+2*\skip,\X+0.8*\h/6) circle (0.3);

 \draw[black] (W1) -- (LW2);
 \draw[black] (RW2) -- (W3);
 \draw[black] (W3) -- (W4);
 \draw[black] (W4) -- (LW5);
 \draw[black] (RW5) -- (W6);
 \draw[black] (W6) -- (W1);

 \draw[black] (X1) -- (LX2);
 \draw[black] (RX2) -- (X3);
 \draw[black] (X3) -- (X4);
 \draw[black] (X4) -- (LX5);
 \draw[black] (RX5) -- (X6);
 \draw[black] (X6) -- (X1);

	\end{tikzpicture}
	\caption{Graph of Lemma~\ref{lma:C_6-close}.}
	\label{figure:2 C_6 blow-ups}
\end{figure}
		
	\begin{proof}[\textbf{Proof of Theorem~\ref{theorem: partial degree}}]
		This proof of the theorem actually proves the following slightly stronger statement: \emph{For every $c \ge 58$, there exists $n_0 = n_0(c)$ such that every tripartite graph $G=G_3(n)$ with $n \ge n_0$, $\delta(G)\geq n+ 30^5 c^4 n^{\frac{1}{2}}$ and 
		\begin{align}\label{eq:32m}
	2\delta^+(G) + 2\delta^-(G) + \max\{\delta^+(G), \delta^-(G)\}\ge \left(1+35c^{-1}\right)n
		\end{align}
contains a~$K_3(2)$.} 

    Suppose to the contrary that there exists a~$K_3(2)$-free tripartite graph $G=G_3(n)$ satisfies the conditions above. Let $\alpha = (35c)^{-2}$ and $\tG$ be the spanning subgraph of $G$ with $E(\tG) = \{uv\in E(G): T(uv)\le \alpha n\}$. 
		For $i \in [3]$, let $S_i^+$ (and $S_i^-$) be a subset $S \subseteq V_i$ of size~$4 \alpha^{-2} n^{1/2}$ with $\sum_{v \in S} d^+_{G - \tG}(v)$ (and $\sum_{v \in S} d^-_{G - \tG}(v)$, respectivley) maximal. 
By Lemma~\ref{proposition: upper bound for N^+G,alpha} with $k=2/\alpha$, we deduce that, for any $v\in V_i\setminus (S_i^+\cup S_i^-)$, we have 
        \begin{align}\label{eq:d+-}
        d^+_{\tG}(v)\ge d^+_G(v) - 4 \alpha^{-2} n^{1/2} \quad \text{and} \quad d^-_{\tG}(v)\ge d^-_G(v) - 4 \alpha^{-2} n^{1/2}.
        \end{align}
		
Let $G' = \tG \setminus \bigcup_{i\in [3]} (S_i^+\cup S_i^-)$.
Clearly, $G'$ is a tripartite graph with parts of size $n' = n- 8 \alpha^{-2}n^{1/2}$.
By~\eqref{eq:d+-}, 
\begin{align*} 
        \delta^+(G')\ge \delta^+(\tG) - 8 \alpha^{-2}n^{1/2} > \delta^+(G) - 12 \alpha^{-2} n^{1/2}
\end{align*}
and analogously $\delta^-(G') \ge \delta^-(G) - 12 \alpha^{-2} n^{1/2}$. Using \eqref{eq:d+-} and $\alpha = (35c)^{-2}$, we obtain that 
        \begin{align*}
            \delta(G')&\ge \delta(G) - 24 \alpha^{-2} n^{\frac12} \ge n+ 30^5 c^4 n^{\frac{1}{2}} - {24}(35 c)^4 n^{\frac12} 
            \ge n'+ 28c^2n'^{\frac12}. 
        \end{align*}
        Without loss of generality, we assume $\delta^+(G')\ge \delta^-(G')$ (otherwise we reverse the direction of~$G'$). We claim that 
        \begin{align}\label{eq:32m'}
        3\delta^+(G')+2\delta^-(G')\ge n'+30c^{-1}n'.
        \end{align}
        Indeed, if $\delta^+(G)\ge \delta^-(G)$, then we have 
        \begin{align*}
            3\delta^+(G')+2\delta^-(G')&\ge 3\delta^+(G)+2\delta^-(G) - 60\alpha^{-2}n^{\frac12} \\
            &\overset{\mathclap{\text{\eqref{eq:32m}}}}{\ge} \left(1+35c^{-1}\right)n - 60\alpha^{-2}n^{\frac12} \ge \left(1+30c^{-1}\right)n' 
        \end{align*}
        as $n$ is sufficiently large. 
        If $\delta^-(G)\ge \delta^+(G)$, then, since $\delta^+(G')\ge \delta^-(G')$, we have       
         \begin{align*}
            3\delta^+(G')+2\delta^-(G')&\ge 3\delta^-(G')+2\delta^+(G')
            \ge 3\delta^-(G)+2\delta^+(G) - 60 \alpha^{-2} n^{\frac12} \\
          & \overset{\mathclap{\text{\eqref{eq:32m}}}}{\ge} \left(1+30c^{-1}\right)n'. 
        \end{align*}
        Hence \eqref{eq:32m'} holds.

        Now we prove that $G'$ contains a~$K_3(2)$, which contradicts our assumption that $G$ is $K_3(2)$-free. Let $V_1'$, $V_2'$, $V_3'$ be the three vertex classes of $G'$. Note that $T_{G'}(uv)\le T_{\tG}(uv)\le \alpha n = n/(35c)^2\le n'/(30c)^2$ for all $uv\in E(G')$.
				By~\eqref{eq:32m'}, $\delta^+(G') \ge n'/5$. 
        By Lemma~\ref{Lemma: 2 C_6 blow-ups} with $\varepsilon = c^{-1}$, $V(G')$ can be partitioned into $W_1,\dots,W_6,U_1,U_2,U_3$ such that, for $i\in [6]$, 
        \begin{enumerate}[label = {\rm (\alph*)}]
          \item $W_i,U_i \subseteq V'_{i\pmod{3}}$;\label{itm:C6:1:1}
          \item $\delta^+(G')-c^{-1}n'\le |W_i|\le \delta^+(G')+c^{-1}n'$;
          \item for all $w\in W_i$ and $S\in \{W_{i-1},W_{i+1},U_{i-1}\}$, $d_{G'}(w,S)\ge |S|-c^{-1}n'$; \label{itm:C6:4:3}
          \item for all $w\in W_i$, $d^+_{G'}(w)\le \delta^+(G')+c^{-1}n'$ and $d^-_{G'}(w)\ge n'-\delta^+(G')-c^{-1}n'$.\label{itm:C6:4:4}
        \end{enumerate}

        If $\delta^+(G')\ge (n'+26c^{-1}n')/{3}$, then we apply Lemma~\ref{lma:C_6-close} with $d=0$ and $X_i=\emptyset$ for all~$i\in [6]$ and obtain a~$K_3(2)$ in~$G'$, a contradiction.

        If $\delta^-(G')\le \delta^+(G')< (n'+26c^{-1}n')/{3}$, then by \eqref{eq:32m'}, we have
        \[
        \delta^-(G')\ge \frac12(n'+30c^{-1}n' - 3\delta^+(G'))  \ge \frac12 (n'+30c^{-1}n' - (n'+26c^{-1}n') )  \ge 2c^{-1}n'.
        \]       
        By reversing the direction of $G'$ and applying Lemma~\ref{Lemma: 2 C_6 blow-ups} with $\varepsilon = c^{-1}$ and noting $ \delta^-(G') \ge 2 c^{-1} n'$, we obtain a partition $W_1',\dots,W_6',U_1',U_2',U_3'$ of $V(G')$ such that, for $i\in [6]$, 
          \begin{enumerate}[label = {\rm (\alph*)}]
          \setcounter{enumi}{4}
          \item $W'_i,U'_i \subseteq V'_{5-i\pmod{3}}$;\label{itm:C6:1:5}
          \item $\delta^-(G')-c^{-1}n'\le |W'_i|\le \delta^-(G')+c^{-1}n'$;
          \item for all $w'\in W'_i$ and $S\in \{W'_{i-1},W'_{i+1},U'_{i-1}\}$, $d_{G'}(w',S)\ge |S|-c^{-1}n'$; \label{itm:C6:4:7}
          \item for all $w'\in W'_i$, $d^-_{G'}(w')\le \delta^-(G')+c^{-1}n'$ and $d^+_{G'}(w')\ge n'-\delta^-(G')-c^{-1}n'$.\label{itm:C6:4:8}
        \end{enumerate}      
        
        Let $X_i = W'_{8-i\pmod{6}}$ for $i\in [6]$. This and \ref{itm:C6:1:5} together imply that $X_i \subseteq V'_{i\pmod{3}}$.
         We claim that $W_1,\dots,W_6,X_1,\dots,X_6$ are pairwise disjoint. Indeed, suppose to the contrary that there exists a vertex $v\in \left(W_i\cup W_{i+3}\right)\cap (X_{i}\cup X_{i+3})$ for some $i\in [3]$. Then, by \ref{itm:C6:4:4} and \ref{itm:C6:4:8}, 
        we have $$n'-\delta^-(G')-c^{-1}n'\le d^+_{G'}(v)\le \delta^+(G')+c^{-1}n',$$ 
        which implies that $\delta^+(G')+\delta^-(G')\ge n'-2c^{-1}n'$, contradicting $\delta^-(G')\le \delta^+(G')< (n'+26c^{-1}n')/{3}$ as $c\ge 58$.

        Let $d=\delta^-(G')$. It is easy to see that all the assumptions of Lemma~\ref{lma:C_6-close} hold, for example,  \ref{itm:c6close:3} holds because of \ref{itm:C6:4:3} and \ref{itm:C6:4:7}, and the fact that $W_i\subseteq U'_{i\pmod{3}}$ and $W'_i\subseteq U_{i\pmod{3}}$ for $i\in [6]$.
        We can thus obtain a~$K_3(2)$ in $G'$ by applying Lemma~\ref{lma:C_6-close}. This contradicts our assumption and completes the proof of Theorem~\ref{theorem: partial degree}.
	\end{proof}

	\subsection{Proof of Lemma~\ref{Lemma: 2 C_6 blow-ups}}
	
	We begin with a simple proposition.
	\begin{proposition}\label{Proposition: A-B set}
		Let $0\leq \lambda\leq 1/{10}$ and $G$ be a bipartite graph with vertex classes $A$ and~$B$. Suppose that for all $a\in A$, $d_G(a)\geq (1-\lambda)|B|$. Then there exists a subset $B'\subseteq B$ of size $|B'|\geq (1-5\lambda)|B|$ such that for all $a\in A$ and $b\in B'$, $d_G(a,B')\geq (1-2\lambda)|B'|$ and $d_G(b,A)\geq 4|A|/5$.
	\end{proposition}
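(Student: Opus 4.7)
The plan is a straightforward double-counting argument. Let $B_0 = \{ b \in B : d_G(b,A) < 4|A|/5 \}$ be the set of "low-degree" vertices in $B$, and set $B' := B \setminus B_0$. Then the second desired property, $d_G(b,A) \ge 4|A|/5$ for every $b \in B'$, holds by the definition of $B'$. It remains to bound $|B_0|$ from above and to verify the other two conditions.

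First I would bound $|B_0|$ by counting edges two ways. Since $d_G(a) \ge (1-\lambda)|B|$ for every $a \in A$,
\begin{align*}
(1-\lambda)|A||B| \;\le\; \sum_{a\in A} d_G(a) \;=\; \sum_{b \in B} d_G(b,A) \;\le\; \tfrac{4}{5}|A|\cdot |B_0| + |A|\cdot(|B|-|B_0|),
\end{align*}
which rearranges to $|B_0| \le 5\lambda|B|$, so $|B'| \ge (1-5\lambda)|B|$ as required. Moreover, using $\lambda \le 1/10$, this gives $|B'| \ge |B|/2$, a fact I will use in the last step.

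For the remaining condition, fix any $a \in A$. Since $d_G(a) \ge (1-\lambda)|B|$ and $|B \setminus B'| = |B_0| \le |B| - |B'|$, we have
\begin{align*}
d_G(a, B') \;\ge\; d_G(a) - |B_0| \;\ge\; (1-\lambda)|B| - (|B| - |B'|) \;=\; |B'| - \lambda |B|.
\end{align*}
To conclude $d_G(a,B') \ge (1-2\lambda)|B'|$, it suffices to show $2\lambda |B'| \ge \lambda |B|$, i.e. $|B'| \ge |B|/2$, which we already established from $\lambda \le 1/10$.

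There is no real obstacle here; the only place the hypothesis $\lambda \le 1/10$ is used is to guarantee $|B'| \ge |B|/2$ in the final inequality. The constants $4/5$ in the threshold for $B_0$, $5\lambda$ in the size bound, and $2\lambda$ in the degree bound all fit together tightly precisely at $\lambda = 1/10$, confirming that the proposition is stated at the natural threshold for this argument.
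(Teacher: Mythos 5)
Your proof is correct and is essentially identical to the paper's: both define $B'$ as the set of vertices of $B$ with degree at least $4|A|/5$ into $A$, bound $|B\setminus B'|\le 5\lambda|B|$ by double-counting edges, and then use $|B'|\ge |B|/2$ (from $\lambda\le 1/10$) to convert $d_G(a,B')\ge |B'|-\lambda|B|$ into $(1-2\lambda)|B'|$. No issues.
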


	\begin{proof}
		Let $B'=\{b\in B: d_G(b)\geq 4|A|/5\}$.
		Note that
		\begin{equation*}
			\begin{aligned}
				|A||B'|+\frac{4}{5}|A||B\setminus B'|&\geq \sum\limits_{b\in B}d_G(b)
				= e(G)
				=\sum\limits_{a\in A}d_G(a)
				\geq (1-\lambda)|B||A|.
			\end{aligned}
		\end{equation*}
		This implies $|B'|\geq (1-5\lambda)|B|\ge |B|/2$.
		Clearly $d_G(a,B')\geq |B'| - \lambda|B|\geq (1-2\lambda)|B'|$, and the result follows.
	\end{proof}

	\begin{proof}[\textbf{Proof of Lemma~\ref{Lemma: 2 C_6 blow-ups}}]
Suppose $G$ is $K_3(2)$-free. Let $\alpha = (\frac{\eps}{30})^2$. 
Without loss of generality, we will assume that there is an $a_0\in V_3$ with $d^+(a_0) = \delta^+(G) = \beta n $. 
Furthermore, for all $vw\in E(G)$ with $v\in V_{i-1}$ and $w\in V_{i}$, we have
\begin{align}
\alpha n &\ge T(vw) = | N(w)\cap N(v) | \ge d^+(w) + d^-(v) - n 
= d(w) - d^-(w) + d^-(v) - n \nonumber \\
& \ge \delta(G) - d^-(w) + d^-(v) - n 
\ge d^-(v) - d^-(w), \nonumber \\
			d^-(w)& \geq d^-(v)-\alpha n.	\label{eqn:vw}
	\end{align}

		Since we can view $G$ as an oriented graph with direction from $V_i$ to $V_{i+1}$ and $\delta^+(G) = \beta n$, we can find a directed path $P=a_0a_1\dots a_{12}$ of length $12$ in~$G$. 
		Let $A_0 = \{a_0\}$ and $A_i = {N}^+(a_{i-1})$ for $i\in [12]$. 
		Note that $a_i\in A_i$.

		\begin{claim}\label{Claim:A_i}
			The sets $A_i$'s satisfy the following properties.
			\begin{enumerate}[label = {\rm (\roman*)}]
				\item For $i\in [12]$, we have $|A_i|\geq \beta n$.
				\label{itm:C_6:1}
				
				\item For $i\in \{0,1,\dots,12\}$ and $v\in A_i$, we have $d^-(v)\geq (1-\beta-i\alpha)n$.
				\label{itm:C_6:2}
				
				\item For $i\in \{0,\dots,11\}$ and $v\in A_i$, we have $d(v, A_{i+1})\ge |A_{i+1}|-(i+1)\alpha n\geq (\beta -(i+1)\alpha)n$ and $d^+(v)\leq (\beta+i\alpha)n$. In particular, for $i\in [12]$, $|A_i|\leq d^+(a_{i-1})\leq (\beta +(i-1) \alpha)n$.
				\label{itm:C_6:3}
				
				\item For $i\in \{0,1,\dots,10\}$ and $v\in A_i$, we have $d^-(v,A_{i+2})\leq (i+3)\alpha n$.
				\label{itm:C_6:4}
				
				\item For $i\in \{0,1,\dots,9\}$, $|A_i\cap A_{i+3}|\leq 5\sqrt{\alpha} n$.
				\label{itm:C_6:5}
				
				\item For $i\in \{0,1,\dots,10\}$ and $v\in A_i$, we have $|V_{i-1}\setminus\left(A_{i+2}\cup N^-(v)\right)|\leq (2i+3)\alpha n$.
				\label{itm:C_6:6}
				
				\item For $i\in \{1,\dots,6\}$, $|A_i\cap A_{i+6}|\geq |A_i|-8\sqrt{\alpha} n\geq (\beta -8\sqrt{\alpha})n$.
				\label{itm:C_6:7}
			\end{enumerate}
		\end{claim}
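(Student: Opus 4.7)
The plan is to verify properties (i)--(vii) in order, leveraging the edge-triangle bound $T(uw) \le \alpha n$ (which yields~\eqref{eqn:vw}), $\delta(G) \ge n$, and $\delta^+(G) = \beta n$; only (v) and (vii) will need the full strength of $K_3(2)$-freeness. Property~(i) is immediate from $|A_i| = d^+(a_{i-1}) \ge \delta^+(G) = \beta n$. For (ii), I would induct on $i$: the base $i=0$ gives $d^-(a_0) \ge n - \beta n$ via $\delta(G) \ge n$, and the inductive step applies~\eqref{eqn:vw} to the edge $a_{i-1}v$ for $v \in A_i$ to get $d^-(v) \ge d^-(a_{i-1}) - \alpha n$.

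For~(iii), the symmetric counterpart of~\eqref{eqn:vw}---namely $d^+(w) \le d^+(v) + \alpha n$ for an edge $vw$ from $V_{i-1}$ to $V_i$, obtained by running the same derivation with $d^-$ replaced by $d^+$ and using $d^-(v) \ge n - d^+(v)$---yields $d^+(v) \le (\beta + i\alpha)n$ for $v \in A_i$ by induction along $P$, and hence $|A_i| = d^+(a_{i-1}) \le (\beta + (i-1)\alpha)n$. The codegree bound $d(v, A_{i+1}) \ge |A_{i+1}| - (i+1)\alpha n$ then exploits that both $N^+(v)$ and $A_{i+1} = N^+(a_i)$ have intersection at most $\alpha n$ with $N^-(a_{i-1})$ (from $T(a_{i-1}v), T(a_{i-1}a_i) \le \alpha n$); hence both are essentially confined to the set $V_{i+1} \setminus N^-(a_{i-1})$, which by~(ii) has size at most $(\beta + (i-1)\alpha)n$, and inclusion-exclusion inside this small ambient set gives the desired overlap. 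Property~(iv) then follows: some $w \in A_{i+1}$ is adjacent to $v$ by the codegree bound, and $|N^-(v) \cap A_{i+2}| \le |N^-(v) \cap N^+(w)| + |A_{i+2} \setminus N^+(w)| \le \alpha n + (i+2)\alpha n$ via $T(vw) \le \alpha n$ and~(iii) applied at index $i+1$. Property~(vi) is then immediate inclusion-exclusion from (i), (ii), and (iv): $|A_{i+2} \cup N^-(v)| \ge \beta n + (1 - \beta - i\alpha)n - (i+3)\alpha n = n - (2i+3)\alpha n$.

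The main obstacle is~(v), which encodes the near-disjointness of $A_i$ and $A_{i+3}$ in any approximate $C_6$-blowup. My approach is: for $v, v' \in B := A_i \cap A_{i+3}$, both $a_{i-1}$ and $a_{i+2}$ are common in-neighbors in $V_{i-1}$, so any two common out-neighbors $u, u' \in V_{i+1}$ of the quadruple $\{v, v', a_{i-1}, a_{i+2}\}$ would complete a $K_3(2)$. Hence for each pair $\{v, v'\} \in \binom{B}{2}$ the set $N^+(v) \cap N^+(v') \cap N^-(a_{i-1}) \cap N^-(a_{i+2})$ has size at most one. I would then set up a suitable auxiliary bipartite graph (for instance between $B$ and $N^-(a_{i-1}) \cap N^-(a_{i+2}) \subseteq V_{i+1}$) and apply a Kov\'ari--S\'os--Tur\'an-style convexity argument, calibrated against the triangle-bound codegree $|N^+(v) \cap N^-(a_{i-1})| \le \alpha n$, to extract the square-root dependence $|B| \le 5\sqrt{\alpha}\,n$. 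Choosing the right auxiliary graph so that the density, the forbidden $K_{2,2}$-structure, and the edge-triangle bound cooperate is the delicate calibration step.

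Finally,~(vii) builds on~(v) and~(vi). A vertex $v \in A_i \setminus A_{i+6}$ satisfies $a_{i+5} \notin N^-(v)$, so by~(vi) either $a_{i+5} \in A_{i+2}$ (a condition independent of $v$) or $v$ lies in the $(2i+3)\alpha n$-sized exceptional set attached to~(vi). The first alternative can be ruled out by pre-selecting $a_{i+5}$ inside the set $A_{i+5} \setminus A_{i+2}$, which is non-empty by~(v) since $|A_{i+2} \cap A_{i+5}| \le 5\sqrt{\alpha}\,n < |A_{i+5}|$; an averaging argument over $v$ then bounds $|A_i \setminus A_{i+6}|$ by $8\sqrt{\alpha}\,n$. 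The hardest part throughout is pinning down the $\sqrt{\alpha}$-scaling in~(v); once this is in hand, the remaining estimates fit together cleanly.
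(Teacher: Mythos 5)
Your treatment of \ref{itm:C_6:1}--\ref{itm:C_6:4} and \ref{itm:C_6:6} matches the paper's proof, but both \ref{itm:C_6:5} and \ref{itm:C_6:7} --- exactly the two parts you flag as hardest --- contain genuine gaps. For \ref{itm:C_6:5}, the K{\H{o}}v{\'a}ri--S{\'o}s--Tur{\'a}n route via $K_3(2)$-freeness cannot be calibrated to work: the triangle bound forces $d^+(v,N^-(a_{i-1}))=T(va_{i-1})\le \alpha n$ for every $v\in B:=A_i\cap A_{i+3}$, so your auxiliary bipartite graph between $B$ and $U=N^-(a_{i-1})\cap N^-(a_{i+2})$ is \emph{sparse}, and forbidding a $K_{2,2}$ in a sparse bipartite graph yields no upper bound on $|B|$; nor is there a usable lower bound on $d(v,U)$, since $|V_{i+1}\setminus U|$ can be about $2\beta n$ while $d^+(v)$ is only about $\beta n$. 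In fact $K_3(2)$-freeness is not needed anywhere in this claim. The paper proves \ref{itm:C_6:5} by pure counting from \ref{itm:C_6:3} and \ref{itm:C_6:4}: by \ref{itm:C_6:3} every $v\in A_{i+2}$ has $d(v,A_{i+3})\ge |A_{i+3}|-(i+3)\alpha n\ge(1-\sqrt\alpha)|A_{i+3}|$, so by Proposition~\ref{Proposition: A-B set} (with $\lambda=\sqrt\alpha$) all but $5\sqrt{\alpha}\,|A_{i+3}|$ vertices of $A_{i+3}$ have at least $\tfrac45|A_{i+2}|>(i+3)\alpha n$ neighbours in $A_{i+2}$, whereas by \ref{itm:C_6:4} every vertex of $A_i$ has at most $(i+3)\alpha n$ neighbours in $A_{i+2}$; hence $A_i\cap A_{i+3}$ is contained in the exceptional set.

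For \ref{itm:C_6:7}, the step ``by \ref{itm:C_6:6} either $a_{i+5}\in A_{i+2}$ or $v$ lies in the $(2i+3)\alpha n$-sized exceptional set'' misreads \ref{itm:C_6:6}: that property bounds, for each fixed $v$, the \emph{cardinality} of $V_{i-1}\setminus(A_{i+2}\cup N^-(v))$, but says nothing about which vertices lie in it; the single fixed vertex $a_{i+5}$ may belong to this set for \emph{every} $v\in A_i$ simultaneously, so no averaging over $v$ bounds $|A_i\setminus A_{i+6}|$, and pre-selecting $a_{i+5}\notin A_{i+2}$ does not help. The paper's argument instead uses \ref{itm:C_6:5} and \ref{itm:C_6:6} to get $d^-(v,A_{i+5})\ge |A_{i+5}|-6\sqrt{\alpha}\,n$ for all $v\in A_i$, then averages over $A_{i+5}$ to find \emph{some} $w\in A_{i+5}$ (not necessarily the path vertex $a_{i+5}$) with $d^+(w,A_i)\ge |A_i|-7\sqrt{\alpha}\,n$; combining this with $d^+(w,A_{i+6})\ge |A_{i+6}|-(i+6)\alpha n$ from \ref{itm:C_6:3} and $d^+(w)\le(\beta+(i+5)\alpha)n$, inclusion--exclusion inside $N^+(w)$ gives $|A_i\cap A_{i+6}|\ge |A_i|-8\sqrt{\alpha}\,n$. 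The idea you are missing is to exploit a well-connected vertex of $A_{i+5}$ found a posteriori, rather than $a_{i+5}$ itself.
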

		
		\begin{proofclaim}
			We have $|A_i| = d^+(a_{i-1})\ge \delta^+(G) = \beta n$ giving \ref{itm:C_6:1}.
			
			We prove \ref{itm:C_6:2} by induction on~$i$.
			For $i=0$, by using $\beta n = d^+(a_0)$, we have
			\begin{align*}
				d^-(a_0) = d(a_0) - d^+(a_0) \geq n - \beta n  = (1-\beta)n.
			\end{align*}
			So we may assume $i \in [12]$.
			For $v\in A_i$, note that $a_{i-1}v \in E(G)$.
			Together with~\eqref{eqn:vw}, we have 
			\begin{align*}
				d^-(v)\geq d^-(a_{i-1})-\alpha n\geq (1-\beta- (i-1)\alpha )n - \alpha n = (1-\beta-i\alpha) n.
			\end{align*}
			Hence \ref{itm:C_6:2} holds.
			
			For $i\in \{0,\dots,11\}$ and $v\in A_i$, we first show $d^+(v)\leq (\beta+i\alpha)n$ by using \ref{itm:C_6:2}. We know $d^+(a_0)= \beta n$. For $i\in [11]$, since $v\in A_i = N^+(a_{i-1})$, we have $|N^+(v)\cap N^-(a_{i-1})| = T( v a_{i-1}) \le \alpha n$. Consequently,
			\begin{align*}
				d^+(v) \le |N^+(v)\cap N^-(a_{i-1})| + |V_{i+1}\setminus N^-(a_{i-1})| 
				\overset{\mathclap{\text{\ref{itm:C_6:2}}}}{\le}
				\alpha n + (\beta + (i-1) \alpha)n = (\beta+i \alpha)n.
			\end{align*}
			This implies that $|A_i|\leq d^+(a_{i-1})\leq (\beta+(i-1)\alpha)n$ for $i\in [12]$.

			Next we show $d(v, A_{i+1})\ge |A_{i+1}|-(i+1)\alpha n\geq (\beta -(i+1)\alpha)n$ for $v\in A_i$ and $i\in \{0,\dots,11\}$. First, $d(a_0,A_1)=|A_1| > (\beta - \alpha) n$. 
			Note that 	
			\begin{align}
				\nonumber
				| V_{i+1}\setminus  ( N^-(a_{i-1}) \cup  N^+(v) )|
				& = |V_{i+1}\setminus N^-(a_{i-1})|  - |N^+(v) \setminus N^-(a_{i-1}))|\\
				\nonumber
				& \overset{\mathclap{\text{\ref{itm:C_6:2}}}}{\le }
				(\beta+ (i-1) \alpha)n - d^+(v) + T(a_{i-1} v)\\
				& \le (\beta+ (i-1) \alpha)n - \delta^+(G) + \alpha n 
				= i \alpha n.
				\label{eqn:C_6:1}
			\end{align}
It follows that 
			\begin{align*}
				d(v,A_{i+1})  
				& \ge |A_{i+1}| - |A_{i+1}\cap N^-(a_{i-1})| - |A_{i+1} \setminus  ( N^-(a_{i-1}) \cup  N^+(v) )|\\
				& \ge |A_{i+1}| - T(a_{i}a_{i-1}) - |V_{i+1} \setminus  ( N^-(a_{i-1}) \cup  N^+(v) )|\\
				& \overset{\mathclap{\text{\eqref{eqn:C_6:1}}}}{\ge }
				|A_{i+1}| - \alpha n - i \alpha n \overset{\mathclap{\text{\ref{itm:C_6:1}}}}{\ge } (\beta - (i+1)\alpha ) n.
			\end{align*}
			Hence \ref{itm:C_6:3} holds.
	
		Suppose $i\in \{0,1,\dots,10\}$ and $v\in A_i$. Let $w\in N^+(v,A_{i+1})$, which exists because \ref{itm:C_6:3} implies that $d^+(v, A_{i+1})\ge |A_{i+1}| - (i+1)\alpha n >0$. Then 
			\begin{align*}
				\alpha n \ge T(vw) &= |N^-(v)\cap N^+(w)|
				\geq |N^-(v)\cap N^+(w)\cap A_{i+2}| \\
				&\geq d^-(v,A_{i+2}) + d^+(w,A_{i+2})-|A_{i+2}| \\
				& \overset{\mathclap{\text{\ref{itm:C_6:3}}}}{ \ge } d^-(v,A_{i+2}) +(|A_{i+2}|-(i+2)\alpha n)-|A_{i+2}| = d^-(v,A_{i+2}) -(i+2)\alpha n.
			\end{align*}			
			This gives $d^-(v,A_{i+2})\le (i+3)\alpha n$, confirming 
			\ref{itm:C_6:4}. 			
			
			By~\ref{itm:C_6:1} and~\ref{itm:C_6:3}, for all $v\in A_{i+2}$, we have 
			\begin{align*}
				d^+(v,A_{i+3})>|A_{i+3}|-(i+3)\alpha n \geq (1-\sqrt{\alpha})|A_{i+3}|,
			\end{align*} 
			here we use the assumption that $\alpha = (\eps/ 30)^2$ and $\beta \ge 30\sqrt{\alpha}$. 
			By applying Proposition~\ref{Proposition: A-B set} on $G[A_{i+2},A_{i+3}]$, we have a subset $A_{i+3}'\subseteq A_{i+3}$ with size $(1-5\sqrt{\alpha})|A_{i+3}|$ such that, for all~$w\in A_{i+3}'$, 
			\begin{align*}
				d(w,A_{i+2})\geq \frac{4}{5}|A_{i+2}| > (i+3)\alpha n.
			\end{align*}
			Since $d(v, A_{i+2})\le (i+3)\alpha n$ for all $v\in A_i$ by~\ref{itm:C_6:4}, it follows that $A_i\cap A_{i+3}\subseteq A_{i+3}\setminus A_{i+3}'$. Therefore, $|A_i\cap A_{i+3}| \le 5\sqrt{\alpha} |A_{i+3}|\le 5\sqrt{\alpha} n$ confirming \ref{itm:C_6:5}.
			
			For all $v\in A_i$,
			\begin{align*}
				d^-(v,V_{i-1}\setminus A_{i+2})& = d^-(v)-d^-(v,A_{i+2})
				\overset{\mathclap{\text{\ref{itm:C_6:2},\ref{itm:C_6:4}}}}{ \ge }
				(1-\beta-i\alpha )n-(i+3) \alpha n\\
				&= (1-\beta-(2i+3) \alpha )n
				\overset{\mathclap{\text{\ref{itm:C_6:1}}}}{ \ge }
				|V_{i-1}|-|A_{i+2}|-(2i+3)\alpha n\\
				& = |V_{i-1}\setminus A_{i+2}|-(2i+3)\alpha n.
			\end{align*}
			Hence $|V_{i-1}\setminus\left(A_{i+2}\cup N^-(v)\right)|\leq (2i+3)\alpha n$ and \ref{itm:C_6:6} holds.
			
			Finally, for $0\le i\le 6$ and $v\in A_i$, we have
			\begin{align*}
				d^-(v,A_{i+5})& \geq  d^-(v,A_{i+5}\setminus A_{i+2})
				\geq |A_{i+5}\setminus A_{i+2}|-|V_{i-1}\setminus\left(A_{i+2}\cup N^-(v)\right)|\\
				&			\overset{\mathclap{\text{\ref{itm:C_6:5},\ref{itm:C_6:6}}}}{ \ge }
				|A_{i+5}|-5\sqrt{\alpha} n-(2i+3)\alpha n
				\geq  |A_{i+5}|-6\sqrt{\alpha } n.
			\end{align*}
			Hence there exists a vertex $w\in A_{i+5}$ such that
			\begin{align*}
				d^+(w,A_i)&\geq \frac{e(A_{i},A_{i+5})}{|A_{i+5}|} \ge 		
				\frac{|A_i|(|A_{i+5}|-6\sqrt{\alpha} n)}{|A_{i+5}|}
				=|A_i|-\frac{|A_i|}{|A_{i+5}|}6\sqrt{\alpha} n
				\geq |A_i|-7\sqrt{\alpha} n,
			\end{align*}
			where the last inequality holds because $|A_{i+5}|\geq \beta n$ and $|A_i|\leq (\beta + (i-1)\alpha)n$ by~\ref{itm:C_6:1} and~\ref{itm:C_6:3}, and consequently, $\frac{|A_i|}{|A_{i+1}|}\le \frac{\beta + 5\alpha}{\beta}<\frac76$ by our assumption on $\alpha$ and $\beta$. 
			Therefore,
			\begin{align*}
				|A_i\cap A_{i+6}|&\geq |A_i\cap A_{i+6}\cap N^+(w)|
				\geq d^+(w,A_i)+d^+(w,A_{i+6})-d^+(w)\\
				&
				\overset{\mathclap{\text{\ref{itm:C_6:3}}}}{>}
				|A_i|-7\sqrt{\alpha} n + (\beta -(i+6)\alpha)n -(\beta+(i+5)\alpha)n
				\geq |A_i|-8\sqrt{\alpha} n,
			\end{align*}
			confirming \ref{itm:C_6:7}.
		\end{proofclaim}
		
		Now we come back to the proof of the lemma. For $i\in [6]$, let $W_i=\left(A_i\cap A_{i+6}\right)\setminus A_{i+3}$. If there exists some $i\in [3]$ such that $|W_i|+|W_{i+3}|>n$, then this contradicts the fact that $W_i,W_{i+3}\subseteq V_i$ and implies that $G$ contains a~$K_3(2)$. Otherwise, for $j\in [3]$, let $U_j=V_j\setminus \left(W_j\cup W_{j+3}\right)$. Since $W_{j+3}\subseteq A_{j+3}$ and $W_j\cap A_{j+3}=\emptyset$, it follows that $W_j, W_{j+3}$, and $U_j$ are pairwise disjoint subsets of $V_j$, in particular, \ref{itm:C6:1} holds.
Hence $\mathcal{P}=\{W_1,\dots,W_6,U_1,U_2, U_3\}$ is a partition of $V(G)$.

		By Claim~\ref{Claim:A_i}~\ref{itm:C_6:3}, \ref{itm:C_6:5} and~\ref{itm:C_6:7}, for $i\in [6]$, we have 
		\begin{align}\label{eq:Wi}
               \nonumber
			(\beta -13\sqrt{\alpha})n &\leq |A_i\cap A_{i+6}| - |A_i\cap A_{i+3}|\\
			&\le |W_i| \le |A_{i}| \leq (\beta+5\alpha)n.
		\end{align}
		Consequently, 
			$(1-2\beta-10 \alpha)n\leq |U_j|\leq (1-2\beta+26\sqrt{\alpha})n$.
			Since $\eps = 30 \sqrt{\alpha}$, \ref{itm:C6:2a} holds.

		Consider $i \in [6]$ and $v \in W_i$. 
		Trivially, $d(v,W_{i+3})=0=d(v,U_i)$ by~\ref{itm:C6:1}. By Claim~\ref{Claim:A_i}~\ref{itm:C_6:5} and~\ref{itm:C_6:7}, we have $|W_{i+1}|\geq |A_{i+1}|-13\sqrt{\alpha}n$ and thus
		\begin{align}
			d(v,W_{i+1})& \geq d(v,A_{i+1})-13\sqrt{\alpha}n
			\overset{\mathclap{\text{\ref{itm:C_6:3}}}}{>}
			|A_{i+1}|-(i+1)\alpha n -13\sqrt{\alpha}n \nonumber \\		
			&\ge |W_{i+1}|- 7\alpha n -13\sqrt{\alpha}n \ge 
			(\beta - 27\sqrt{\alpha})n. \label{eqn:vW}
		\end{align}
		Together with Claim~\ref{Claim:A_i}~\ref{itm:C_6:3}, this implies that
		\begin{align*}
			d(v,W_{i+4}) +d_G(v,U_{i+1}) & = d^+(v)-d(v,W_{i+1})
			\le (\beta+i\alpha )n - (\beta - 27\sqrt{\alpha})n
			\le 28\sqrt{\alpha}n .
		\end{align*}
		By Claim~\ref{Claim:A_i}~\ref{itm:C_6:4}, 
		\begin{align*}
			d(v,W_{i+2}) \le d^-(v,A_{i+2}) \le (i+3) \alpha n \le 9\alpha n.
		\end{align*}
		Together with Claim~\ref{Claim:A_i}~\ref{itm:C_6:2}, this implies that
		\begin{align*}
			d_G(v,W_{i-1}) + d_G(v,U_{i-1})& = d_G^-(v)-d_G(v,W_{i+2})
			\ge (1-\beta-i\alpha)n - 9 \alpha n \\
			&\geq (1-\beta -15 \alpha ) n \overset{\mathclap{\text{\eqref{eq:Wi}}}}{\ge }  n- |W_{i+2}|-14\sqrt{\alpha} n \\
			&= |W_{i-1}| + |U_{i-1}| - 14\sqrt{\alpha}n.
		\end{align*}
		Therefore, \ref{itm:C6:3} holds. Finally, since $v\in W_i\subseteq A_i$, we have $d^+(v)\le (\beta + i\alpha) n\le (\beta + 6\alpha) n$ by Claim~\ref{Claim:A_i}~\ref{itm:C_6:3}, and $d^-(v)\ge (1-\beta- i\alpha)n\ge (1-\beta- 6\alpha)n$ by Claim~\ref{Claim:A_i}~\ref{itm:C_6:2}. Thus \ref{itm:C6:4} holds. This completes the proof of the lemma. 
	\end{proof}
	
\subsection{Proof of Lemma~\ref{lma:C_6-close}}
\begin{proof}[\textbf{Proof of Lemma~\ref{lma:C_6-close}.}]
For $i\in [3]$, let $R_i=V_i\setminus \left(W_i\cup W_{i+3}\cup X_{i} \cup X_{i+3}\right)$. By \ref{itm:c6close:1}, we have $|R_i|\le n-2\delta^+(G)-2d+4c^{-1}n$. Suppose to the contrary that $G$ is $K_3(2)$-free. We first prove the following claim.

\begin{claim}
There exist disjoint vertex subsets $W^*_1,\dots,W^*_6, X^*_1,\dots ,X^*_6$ such that 
\begin{enumerate}[label = {\rm (\roman*$'$)}]
	\item for all $i \in [6]$, $W_{i} \subseteq W_{i}^* \subseteq V_i$, $X_{i}\subseteq {X_{i}^*} \subseteq V_i$;  \label{itm:c6close:1'}
	\item for all $v \in W^*_i$, $ i \in [6]$ and $j \in \{i+1,i-1\}$, $d(v,W_{j})  \ge 3c^{-1} n $. For all $v\in X_{i}^*$, $ i \in [6]$ and $j \in \{i+1,i-2\}$, $d(v,W_{j})  \ge 3c^{-1} n $;\label{itm:c6close:4}
	\item $| V(G) \setminus \bigcup_{i \in [6]}\left(W^*_i \cup X_{i}^*\right) | \le 24c^2 n^{\frac12}$.  \label{itm:c6close:5}
\end{enumerate}
\end{claim}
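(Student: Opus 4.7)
The plan is to build $W_i^*$ and $X_i^*$ by augmenting the original sets with vertices in $V_i\setminus(W_i\cup W_{i+3}\cup X_i\cup X_{i+3})$ whose degree patterns to the four $W$-subsets in $V_{i-1}\cup V_{i+1}$ match the conditions required by~(ii$'$). Concretely, I would place $v$ into $W_i^*$ whenever $d(v,W_{i-1})\ge 3c^{-1}n$ and $d(v,W_{i+1})\ge 3c^{-1}n$, into $W_{i+3}^*$ whenever $d(v,W_{i+2})\ge 3c^{-1}n$ and $d(v,W_{i+4})\ge 3c^{-1}n$, and into one of $X_i^*, X_{i+3}^*$ (the two have the identical qualifying condition $d(v,W_{i+1})\ge 3c^{-1}n$ and $d(v,W_{i+4})\ge 3c^{-1}n$) whenever that holds, resolving multiple qualifications by a fixed priority rule. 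Condition~(i$'$) is then built in; for~(ii$'$), the inequality $3\delta^+(G)+2d\ge n+26c^{-1}n$ together with $\delta^+(G)\ge d$ forces $\delta^+(G)\ge(n+26c^{-1}n)/5\ge 5c^{-1}n$ for $c\ge 58$, so hypothesis~\ref{itm:c6close:3} yields $d(w,W_{i\pm 1})\ge|W_{i\pm 1}|-c^{-1}n\ge 3c^{-1}n$ for every $w\in W_i$, with the analogous bound for $X_i$; for the added vertices, (ii$'$) is immediate.

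For~(iii$'$), a \emph{bad} vertex $v\in V_i$ (contained in none of the twelve sets) must fail all three qualifying conditions simultaneously. Writing $H_j$ for the statement $d(v,W_j)\ge 3c^{-1}n$ and $L_j$ for its negation, a Boolean case split on the pair $(H_{i+1},H_{i+4})$ partitions the bad vertices in $V_i$ into three patterns:
\[
(\mathrm{B1})\ L_{i+1}\wedge L_{i+4},\qquad
(\mathrm{B2})\ H_{i+1}\wedge L_{i+4}\wedge L_{i-1},\qquad
(\mathrm{B3})\ L_{i+1}\wedge H_{i+4}\wedge L_{i+2}.
\]
I would bound the vertices in each pattern by $O(c^2n^{1/2})$ by invoking Lemma~\ref{proposition: upper bound for N^+G,alpha} with $k=c$, leveraging the $K_3(2)$-freeness assumed at the start of the proof of Lemma~\ref{lma:C_6-close}. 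The mechanism is that the $C_6$-structure makes many edges incident to~$v$ `triangly': for instance, in~(B2) every $w\in N(v)\cap W_{i+1}$ has $d(w,W_{i+2})\ge|W_{i+2}|-c^{-1}n$ by~\ref{itm:c6close:3}, so if $v$ also has $d(v,W_{i+2})\ge 3c^{-1}n$, inclusion--exclusion gives $|N(v)\cap N(w)\cap W_{i+2}|\ge 2c^{-1}n$ and hence $w\in \widetilde D^+_{G,2/c}(v)$. Since $|\widetilde D^+_{G,2/c}(v)|\ge 3c^{-1}n\ge c^2n^{1/2}$ (using $c\le n^{1/6}$), an accumulation of more than $c^2n^{1/2}$ such vertices would trigger Lemma~\ref{proposition: upper bound for N^+G,alpha} and force a forbidden~$K_3(2)$.

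The main obstacle will be the `doubly low' sub-cases of~(B1) and of~(B2)/(B3) with the extra condition $L_{i+2}$ or $L_{i-1}$, where $v$ has no $W$-rich neighborhood through which to host triangly edges directly. I plan to handle these by routing through the $X$-sets: hypothesis~\ref{itm:c6close:3} ensures every $x\in X_{i\pm 1}$ is adjacent to almost all of two specific $W$-subsets lying in the same $V_j$, so provided $v$ has enough neighbors in such an $X$-set---which the size bounds on $|W_j|,|X_j|,|R_j|$ combined with $\delta(G)\ge n+28c^2n^{1/2}$ should guarantee after a pigeonhole---the triangly-edge computation goes through via the $X$-vertex's high-degree $W$-neighborhood, and Lemma~\ref{proposition: upper bound for N^+G,alpha} again yields the desired contradiction. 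Summing the resulting $O(c^2n^{1/2})$ bound over the nine (vertex-class, bad-pattern) combinations then produces the global bound $24c^2n^{1/2}$ required in~(iii$'$).
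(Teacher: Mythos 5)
Your classification scheme and the core mechanism (two consecutive dense sets force many triangly edges, so Lemma~\ref{proposition: upper bound for N^+G,alpha} caps the number of such vertices at $c^2n^{1/2}$) match the paper's proof, and your verification of (i$'$) and (ii$'$) is fine. The genuine gap is exactly where you flag it: the ``doubly low'' vertices. Your proposed fix --- route through an $X$-set in which $v$ has many neighbors --- does not work as stated, because a single dense $X$-set produces nothing. If $v\in V_i$ has many neighbors in $X_{i+1}$, then each such neighbor $x$ dominates $W_{i+2}$ and $X_{i+2}$ (by \ref{itm:c6close:3}), but $T(vx)$ is only large if $v$ \emph{itself} sees $W_{i+2}$ or $X_{i+2}$ densely; in the doubly-low case $d(v,W_{i+2})<3c^{-1}n$ by hypothesis, so you need $d(v,X_{i+2})\ge 3c^{-1}n$ as well, i.e.\ a \emph{consecutive dense pair} $\{X_{i+1},X_{i+2}\}$ (or a mixed pair such as $\{W_{i+1},X_i\}$). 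Your pigeonhole, as described, only extracts one dense set. Moreover, producing the consecutive pair cannot be done from $\delta(G)\ge n+28c^2n^{1/2}$ and the size bounds alone: one must combine the degree of $v$ with $|R_j|\le n-2\delta^+(G)-2d+4c^{-1}n$ \emph{and} the hypothesis $3\delta^+(G)+2d\ge n+26c^{-1}n$ (which you only invoke to get $\delta^+(G)\ge n/5$ for part (ii$'$)). Without that inequality the arithmetic does not close --- e.g.\ a vertex with all four $W$-degrees and all but two $X$-degrees small can still have degree $n$ when $\delta^+(G)\approx n/4$.

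The paper organizes this differently, and the difference is what closes the gap. It first declares a vertex \emph{bad} if its dense-set list $I_v$ contains any of $24$ explicitly listed consecutive pairs ($\{W_i,W_{i+1}\}$, $\{X_i,X_{i+1}\}$, $\{W_{i-2},X_i\}$, $\{W_{i+1},X_i\}$), bounds each by $c^2n^{1/2}$ via the triangle argument, and discards them --- these, and only these, account for the $24c^2n^{1/2}$ in (iii$'$). It then shows every surviving vertex is classified by a pure degree count: a good unclassified $v$ has $|I_v\cap\{W_j\}|\le 1$ and $|I_v\cap\{X_j\}|\le 2$, whence $d(v)\le 2n+26c^{-1}n-(3\delta^+(G)+2d)\le n$, contradicting $\delta(G)>n$. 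To repair your write-up you would need to add precisely this step: show that a doubly-low vertex which is not bad for any of the consecutive pairs has degree below $n$. Separately, note that your count of ``nine (vertex-class, bad-pattern) combinations'' understates the number of Lemma~\ref{proposition: upper bound for N^+G,alpha} invocations once the sub-cases (which consecutive pair is dense, and in which direction the triangly edges point) are separated, so the constant $24$ needs to be justified rather than asserted.
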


\begin{proofclaim}
Let $\V= \bigcup_{i \in [6]}\{W_i, X_i\}$. 
For all $v\in R_1\cup R_2 \cup R_3$, we define $$I_v=\left\{A\in \V: d(v, A)\ge 3c^{-1}n \right\}.$$ 
Let \[
\mathcal{E}= \bigcup_{i \in [6]}\left\{ \{W_i, W_{i+1}\}, \{X_i, X_{i+1}\}, \{W_{i-2}, X_i\}, \{W_{i+1}, X_i\}  \right\}
\] 
be a family of $24$ pairs of $\V$.
For every $\{A, B\}\in \mathcal{E}$, we claim that at most $c^2 n^{\frac{1}{2}}$ vertices $v\in R_1\cup R_2 \cup R_3$ satisfy $\{A, B\}\subseteq I_v$ and call such vertices \emph{bad}. 
Indeed, for $i\in [6]$, let $Y_i$ be the set of vertices $v\in R_1\cup R_2 \cup R_3$ such that $\{W_i, W_{i+1}\}\subseteq I_v$  
(the argument for other pairs of $\mathcal{E}$ is similar). Note that $Y_i \subseteq V_{i+2}$. For all $v\in Y_{i}$, by the definition of $I_v$, we have $d(v,W_i)\ge 3c^{-1}n$ and $d(v,W_{i+1})\ge 3c^{-1}n$. Then at least $3c^{-1}n$ vertices $ w \in N_G(v, W_{i}) \subseteq  N^+_G(v)$ satisfy 
\begin{align*}
	T(vw) \ge | N_G(v,W_{i+1}) \cap N_G(w,W_{i+1}) | 
	\overset{\mathclap{\text{\ref{itm:c6close:3}}}}
	\ge 3c^{-1} n  - c^{-1} n = 2c^{-1}n,
\end{align*}
implying that $|\widetilde{D}_{G,2c^{-1}}^{+}(v)| \ge 3 c^{-1} n$. 
By Lemma~\ref{proposition: upper bound for N^+G,alpha}, we have $|Y_i|\le c^2 n^{\frac12}$, as claimed.

Now we delete all bad vertices and denote the remaining set by $R_1'\cup R_2' \cup R_3'$ (and call their vertices \emph{good} vertices). For $i\in [6]$, define $$W_i^* = W_i\cup \left\{v\in R_1'\cup R_2' \cup R_3': \{W_{i-1},W_{i+1}\}\subseteq I_v\right\}$$ and $$X_{i}^* =\begin{cases}
      X_{i}\cup \left\{v\in R_1'\cup R_2' \cup R_3': \{ W_{i+1},W_{i-2}\}\subseteq I_v\right\} , & i\in \{1,2,3\};\\
      X_i, & i\in \{4,5,6\}.
     \end{cases} $$ 
As before, the subscripts of $W_i^*$ and $X_i^*$ in these definitions are modulo~$6$ with values $1, \dots, 6$. 
Observe that for $i\in [6]$, $W_i^*$ and $X_i^*$ are disjoint because if $v\in W_i^*\cap X_i^*$, then $\{ W_j, W_{j+1} \} \in I_v$ for some $j\in [6]$, contradicting $v\in R_1'\cup R_2' \cup R_3'$.
Next we show that every vertex in $R_1'\cup R_2' \cup R_3'$ belongs to some $W_i^*$ or $X_{i}^*$, which completes the proof.

Suppose to the contrary that there is a vertex $v\in R_1'\cup R_2' \cup R_3'$ which does not belong to any $W_i^*$ and $X_{i}^*$ for $i\in [6]$. We will show that $|I_v\cap \{W_i: i\in [6]\} |\le 1$ and $|I_v\cap \{X_i: i\in [6] \}|\le 2$. By \ref{itm:c6close:1}, 
it follows that
\begin{align*}
	\delta(G)\le d(v)&< \delta^+(G) +c^{-1}n+2(d+c^{-1}n)+5\cdot 3c^{-1}n+2\left(n-2\delta^+(G)-2d+4c^{-1}n\right)\\ &= 2n+26c^{-1}n- \left(3\delta^+(G) +2d\right)\le n,
\end{align*}
contradicting our assumption.

Without loss of generality, assume that $v\in R_2'\subseteq V_2$. Trivially $I_v\subseteq\{W_1,W_3,W_4, W_6$, $X_1,X_3,X_4, X_6\}$. Since $v$ is a good vertex, if $|I_v\cap \{W_i: i\in [6]\} |\ge 2$, then $I_v$ contains either 
$\{W_1,W_4\}$ or $\{W_3, W_6\}$ or $\{W_1,W_3\}$ or $\{W_4,W_6\}$.
If $\{W_1,W_4\}\subseteq I_v$, then by the definition of good vertices, we have $\{W_3, W_6$, $X_3, X_6\}\cap I_v= \emptyset$, and thus $$\delta^+(G)\le d^+(v)\le 4\cdot 3c^{-1}n+|R_3|\le n-2\delta^+(G)-2d+16c^{-1}n,$$ implying $3\delta^+(G)+2d<n+16c^{-1}n$, a contradiction. If $\{W_3, W_6\}$, $\{W_1,W_3\}$ or $\{W_4,W_6\} \subseteq I_v$, then $v$ belongs to $X_{2}^*$, $W_2^*$ or $W_{5}^*$ respectively, contradicting our assumption on $v$. Thus $|I_v\cap \{W_i: i\in [6]\} |\le 1$. 
Furthermore, since $\{X_3, X_4\}\not\subseteq I_v$ and $\{X_1, X_6\}\not\subseteq I_v$, we have $|I_v\cap \{X_i: i\in [6] \}|\le 2$, as claimed.
\end{proofclaim}

Now we go back to the proof of the lemma. Firstly, we have $$\sum\limits_{i\in [6]}|W_{i+1}^*\cup W_{i-1}^*\cup X_{i+2}^* \cup X_{i-1}^*|=2\left(\sum\limits_{i\in [6]}\left(|W_i^*|+|X_{i}^*|\right)\right)\le 6n.$$ Hence there exists an $i\in [6]$ such that $|W_{i+1}^*\cup W_{i-1}^*\cup X_{i+2}^* \cup X_{i-1}^*|\le n$. Without loss of generality, we assume $|W_{2}^*\cup W_{6}^*\cup X_{3}^* \cup X_{6}^*|\le n$, and thus we have $$\delta(G)\overset{\mathclap{\text{\ref{itm:c6close:5}}}}{\ge} |W_{2}^*\cup W_{6}^*\cup X_{3}^* \cup X_{6}^*|+|V(G)\setminus\bigcup\limits_{i\in [6]}(W_i^*\cup X_i^*)|+4c^2n^{\frac12}.$$ So for all $v\in W_1\subseteq V_1$, we have $\left|N_G(v)\cap \left(W_{3}^*\cup W_{5}^*\cup X_{2}^* \cup X_{5}^*\right)\right|\ge 4c^2n^{\frac12}$. Therefore, there must exist a set $A\in \{W_{3}^*,W_{5}^*,X_{2}^*,X_{5}^*\}$ and a subset $\widetilde{W_1}\subseteq W_1$ with $|\widetilde{W_1}|\ge \frac{|W_1|}{4}\ge c^2n^{\frac12} $ such that for all $w\in \widetilde{W_1}$, $d(w,A)\ge c^2n^{\frac{1}{2}}$. If $A$ is one of $W_{5}^*$, $X_{2}^* $ and $ X_{5}^*$, then for all $w'\in N_G(w,A)$ we have $d(w',W_6)\ge 3c^{-1}n$ by \ref{itm:c6close:4}, which implies that 
$$T(ww')\ge |N_G(w',W_6)\cap N_G(w,W_6)|\overset{\mathclap{\text{\ref{itm:c6close:3}}}}\ge 3c^{-1} n  - c^{-1} n = 2c^{-1}n.$$ 
Thus for all $w\in \widetilde{W_1}$, we have $|\widetilde{D}_{G,2c^{-1}}^{+}(w)| \ge |N_G(w,A)| \ge c^{2}n^{\frac12}$. 
By Lemma~\ref{proposition: upper bound for N^+G,alpha}, $G$ contains a~$K_3(2)$, a contradiction. If $A=W_3^*$, then for all $w'\in N_G(w,A)$ we have $d(w',W_2)\ge 3c^{-1}n$. A similar argument shows that $G$ contains a~$K_3(2)$, this contradicts our assumption and completes the proof of Lemma~\ref{lma:C_6-close}.
\end{proof}

\section{New constructions of \texorpdfstring{$K_3(2)$}{}-free tripartite graphs}
\label{Section: Constructions}

For $n = q^2 + q + 1$ where $q$ is a prime power, it is well known (see~\cite{rieman1958problem}) that there is a $K_{2,2}$-free $(q+1)$-regular bipartite graph $G_0=G_2(n)$ (note that $q+1>\sqrt{n}$).
Using $G_0$ as a building block, Bhalkikar and Zhao~\cite{bhalkikar2023subgraphs} constructed many non-isomorphic $K_3(2)$-free tripartite graphs with minimum degree at least $n+n^{\frac{1}{2}}$. In this section, we present some new constructions that differ from those established in \cite{bhalkikar2023subgraphs}.

Our first construction is based on the one given by Bollob\'{a}s, Erd\H{o}s, and Szemer\'{e}di \cite{bollobas1975complete}.
\begin{construction}\label{construction 1}
	Given $n, t \in \mathbb{N}^+$ such that $n\geq \max\{t^2 + t+1, 5t\}$, we construct $G=G_3(n)$ as follows. Let $A_i\subseteq V_i$, $|A_i|=n-2t$, $B_i=V_i-A_i$, $i\in [3]$, and $B_1=\overline{B_2}\cup \overline{B_3}$, $|\overline{B_2}|=t=|\overline{B_3}|$. Join every vertex of $A_1$ to every vertex of $A_2 \cup A_3$, join every vertex of $\overline{B_j}$ to every vertex of $V_j$, $j = 2,3$, and join every vertex of $B_i$ to every vertex of $V_j$ for $i = 2, j = 3$ and $i = 3, j = 2$. Finally, join every vertex of $\overline{B_i}$ to $t$ vertices of $A_j$ for $i = 2, j = 3$ and $i = 3, j = 2$, such that any two different vertices of $\overline{B_i}$ have at most $1$ common neighbor in $A_j$.
\end{construction}
\begin{figure}[H]
	\centering
	\begin{tikzpicture}[scale=1.8]
	\def\x{0}
	\def\X{0}
	\def\l{0.6}
	\def\h{0.25}
	\def\skip{1.2}
	\def\y{1.2}
	\def\Y{-1.5}
	
	\node[circle, fill=black, inner sep=0.001] (BB2) at (\x,\X) {};
	\node at (\x,\X+\h) {$\overline{B_2}$};
	\draw (\x-\l/2,\X) rectangle (\x+\l/2,\X+\h*0.5);
	
	\node[circle, fill=black, inner sep=0.01] (A1) at (\x+\skip,\X+0.5*\h-\h) {};
	\node at (\x+\skip,\X+0.5*\h) {$A_1$};
	\draw (\x+\skip-\l*0.75,\X+0.5*\h-\h) rectangle (\x+\skip+\l*0.75,\X+0.5*\h+\h);
	
	\node[circle, fill=black, inner sep=0.001] (BB3) at (\x+2*\skip,\X) {};
	\node at (\x+2*\skip,\X+\h) {$\overline{B_3}$};
	\draw (\x+2*\skip-\l/2,\X) rectangle (\x+2*\skip+\l/2,\X+\h*0.5);
	
	
	\node[circle, fill=black, inner sep=0.001] (A2) at (\y-2,\Y+0.25*\h) {};
	\node at (\y-2,\Y-\h*0.5) {$A_2$};
	\draw (\y-2-\l*0.65,\Y-\h-0.25*\h) rectangle (\y-2+\l*0.65,\Y+0.25*\h);
	
	\node[circle, fill=black, inner sep=0.001] (B2) at (\y-2+\skip,\Y) {};
	\node[circle, fill=black, inner sep=0.001] (B2m) at (\y-2+\skip+\l/2,\Y-\h*0.5) {};
	\node at (\y-2+\skip,\Y-\h*0.5) {$B_2$};
	\draw (\y-2+\skip-\l/2,\Y-\h) rectangle (\y-2+\skip+\l/2,\Y);
	
	\node[circle, fill=black, inner sep=0.001] (A3) at (\y+2,\Y+0.25*\h) {};
	\node at (\y+2,\Y-\h*0.5) {$A_3$};
	\draw (\y+2-\l*0.65,\Y-\h-0.25*\h) rectangle (\y+2+\l*0.65,\Y+0.25*\h);
	
	\node[circle, fill=black, inner sep=0.001] (B3) at (\y+2-\skip,\Y) {};
	\node[circle, fill=black, inner sep=0.001] (B3m) at (\y+2-\skip-\l/2,\Y-\h*0.5) {};
	\node at (\y+2-\skip,\Y-\h*0.5) {$B_3$};
	\draw (\y+2-\skip-\l/2,\Y-\h) rectangle (\y+2-\skip+\l/2,\Y);
	
	\draw[black] (A2) -- (BB2) --(B2);
	\draw[black] (A2) -- (A1) --(A3);
	\draw[black] (A3) -- (BB3) --(B3);
	\draw[black] (A2) to[out=25, in=155] (B3m) --(B2m) to[out=25, in=155] (A3);
	
	\draw[red,thick,dashed,-] (BB2) -- (A3);
	\draw[red,thick,dashed,-] (BB3) -- (A2);
	
	\end{tikzpicture}
	\caption{Graph in Construction~\ref{construction 1}.}
	\label{figure:1.2}
\end{figure}
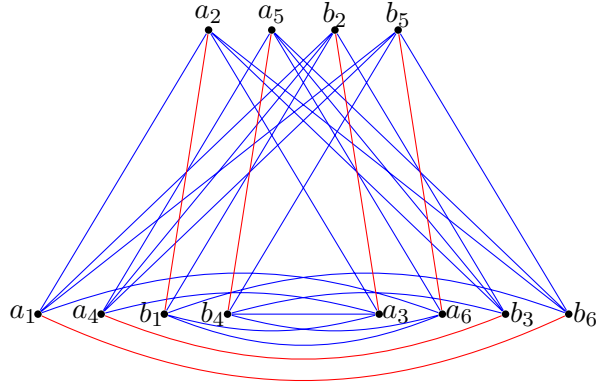
The condition $n\geq t^2 + t+1$ (which implies $t(t-1)\le n-2t-1$) ensures that we can join every vertex of $\overline{B_i}$ to $t$ vertices in $A_j$ for $i = 2, j = 3$ and $i = 3, j = 2$, with the additional constraint that any two distinct vertices of $\overline{B_i}$ share at most one common neighbor in $A_j$. Furthermore, $n\geq 5t$ guarantees that each vertex in $A_1$ has degree at least $n+t$. Thus, in this construction, the minimum degree of $G$ is $n+t$, which can be chosen as $n+(1-o(1))n^{\frac{1}{2}}$ by setting $t = (1-o(1))n^{\frac{1}{2}}$. Moreover, $G$ is clearly $K_3(2)$-free, since any potential $K_3(2)$ would have to lie entirely within either $G[\overline{B_2}\cup B_2 \cup A_3]$ or $G[\overline{B_3}\cup B_3 \cup A_2]$, which is impossible.

In the following, we introduce a gluing operation to construct more examples.
\begin{construction}\label{gluing}
Let $G$ and $G'$ be two $K_3(2)$-free tripartite graphs on disjoint vertex classes $V_1, V_2,V_3$ and $V'_1, V'_2,V'_3$ of size $n$. 
Define $G \odot G'$ to be the tripartite graph obtained from $G \cup G'$ by adding all edges between $V_i$ and~$V'_{i+1}$ for all $i \in [3]$. Then $\delta(G \odot G')= n+ \min \{ \delta(G), \delta(G')\}$.
Since all new edges in $G \odot G'$ does not lie in a triangle, $G \odot G'$ is $K_3(2)$-free.
\end{construction}

This construction allows us to create many new $K_3(2)$-free tripartite graphs. 
For example, let $G_1$ be the graph of Construction~\ref{construction 1} with $t = (1-o(1))n^{\frac{1}{2}}$.
Then $G = G_1 \odot G_1$ is a~$K_3(2)$-free tripartite graph with $2n$ vertices in each part and $\delta(G) = 2n+ (1-o(1))n^{\frac{1}{2}}$.
\medskip

Recall that Bollob\'{a}s, Erd\H{o}s, and Szemer\'{e}di~\cite{bollobas1975complete} conjectured there exists $c$ such that every $G = G_3(n)$ with $\delta(G) \ge n + cn^{1/2}$ contains $K_3(2)$. 
If $\delta^+(G) = \beta n$, then Lemma~\ref{Lemma: 2 C_6 blow-ups} implies that $G$ essentially contains a blow-up of~$C_6$ with vertex classes of size about $\beta n$. 
After removing this blow-up of~$C_6$, we believe that one should be able to reduce to the case when both $\delta^+(G)$ and $\delta^-(G)$ are sublinear in~$n$.
However, we do not know how to handle this case.

	\section*{Acknowledgement}
	
	Jialin He was partially supported by Hong Kong RGC grant GRF 16308219 and Hong Kong RGC grant ECS 26304920. Allan Lo was partially supported by EPSRC, grant no. EP/V002279/1 and EP/V048287/1. Jie Ma was supported by National Key R\&D Program of China 2023YFA1010201 and National Natural Science Foundation of China grant 12125106. Yi Zhao was partially supported by NSF grant DMS 2300346 and Simons Collaboration Grant 710094.
	There are no additional data beyond that contained within the main manuscript.
	For the purpose of open access, a CC BY public copyright license is applied to any AAM arising from this submission.
	
	\bibliographystyle{unsrt}

	\vspace{0.5cm}
	
	\noindent{\it E-mail address}: cyh2020@mail.ustc.edu.cn
	\vspace{0.1cm}
	
	\noindent{\it E-mail address}: majlhe@ust.hk
	\vspace{0.1cm}
	
	\noindent{\it E-mail address}: s.a.lo@bham.ac.uk
	\vspace{0.1cm}
	
	\noindent{\it E-mail address}: luoc@mail.ustc.edu.cn
	\vspace{0.1cm}
	
	\noindent{\it E-mail address}: jiema@ustc.edu.cn
	\vspace{0.1cm}
	
	\noindent{\it E-mail address}: yzhao6@gsu.edu
	
\end{document}